\newtheorem{lemma}{Lemma}
\newtheorem{thm}{Theorem}
\newtheorem{remark}{Remark}
\numberwithin{equation}{section}
\begin{document}

\leftline{ \scriptsize \it}
\title[]
{Bivariate Generalized Baskakov Kantorovich Operators}
\maketitle

\begin{center}
{\bf Meenu Goyal$^1$ and P. N. Agrawal$^2$}
\vskip0.2in
$^{1,2}$Department of Mathematics\\
Indian Institute of Technology Roorkee\\
Roorkee-247667, India\\
\vskip0.2in

$^1$meenu.goyaliitr@yahoo.com
 $^2$pna\_{}iitr@yahoo.co.in
\end{center}
\begin{abstract}
This paper is in continuation of our work in \cite{PNM}, wherein we
introduced generalized Baskakov Kantorovich operators $K_n^a(f;x)$ and
established some approximation properties e.g. local approximation, weighted
approximation, simultaneous approximation and $A-$statistical convergence. Also,
we discussed the rate of convergence for functions having a derivative coinciding
a.e. with a function of bounded variation. The purpose of this paper is to study the
bivariate extension of the operators $K_n^a(f;x)$ and discuss results on the degree
of approximation, Voronovskaja type theorems and their
first order derivatives in polynomial weighted spaces.\\
Keywords: Rate of convergence, Simultaneous approximation, Bivariate modulus of continuity.\\
Mathematics Subject Classification(2010): 41A25, 41A28.
\end{abstract}

\section{introduction}
In \cite{ERN}, Erencin defined the Durrmeyer type modification of generalized
Baskakov operators introduced by Mihesan \cite{MSN}, as
\begin{eqnarray*}
L_n^a(f;x)=\sum_{k=0}^{\infty}W_{n,k}^{a}(x)\frac{1}{B(k+1,n)}\int_{0}^{\infty}
\frac{t^k}{(1+t)^{n+k+1}}f(t)dt,\,\,  x\geq 0,
\end{eqnarray*}
where\\
$W_{n,k}^{a}(x)=e^{\frac{-ax}{1+x}}\frac{P_{k}(n,a)}{k!}\frac{x^k}{(1+x)^{n+k}}
, P_k(n,a)=\displaystyle\sum_{i=0}^{k}{n\choose k}(n)_ia^{k-i},$
and $(n)_0=1, (n)_i=n(n+1)...(n+i-1)$ for $i\geq 1$
and discussed some approximation properties.
Very recently, Agrawal et al.
\cite{PVS} studied the simultaneous approximation and approximation of functions
having derivatives of bounded variation by these operators.\\
In \cite{PNM}, we proposed the Kantorovich modification of
 generalized Baskakov  operators for the function $f$ defined on
$C_{\gamma}[0,\infty):= \{f\in C[0,\infty): f(t) = O(t^{\gamma})$ as
$t\rightarrow\infty,$ for some $\gamma>0\}$ as follows :
\begin{eqnarray}\label{e1}
K_n^a(f;x)=(n+1)\displaystyle\sum_{k=0}^{\infty}W_{n,k}^{a}(x)\displaystyle\int_{\frac{k}{n+1}}^{\frac{k+1}
{n+1}}f(t)dt,\,\,a\geq 0.
\end{eqnarray}
In the present paper, our aim is to consider the bivariate extension of these operators.
In this direction, Stancu \cite{DDS} first introduced new linear positive operators in two
and several dimensional variables. After that Barbosu \cite{BAR} studied the bivariate
extension of Stancu generalization of $q-$Bernstein operators. Dogru and Gupta \cite{DV}
constructed a bivariate generalization of the Meyer-Konig and Zeller operators based on
$q-$ integers while in \cite{OA} Agratini presented two-dimensional extension of some univariate
positive approximation processes expressed by series.

Many papers were published on approximation by modified Sz$\acute{a}$sz-Mirakyan and
Baskakov operators for functions of one or two variables cf. \cite{ALT1, ALT2, BFR, MGR, LRM, MSK, YOU} which deal with convergence, degree of approximation and Voronovskaja type theorems as well as convergence of partial derivatives of these
operators. Wafi and Khatoon \cite{WAF} defined the generalized Baskakov operators for
functions of two variables in polynomial and exponential weighted spaces and discussed
the rate of convergence and direct results. Later, in \cite{AWS}, the convergence of first order derivatives of these operators and a Voronovskaja type theorem were studied.
Recently, $\ddot{O}$rkc$\ddot{u}$ \cite{MO} established the approximation
properties of bivariate extension of q-Sz$\acute{a}$sz-Mirakjan-Kantorovich operators.
Very recently, Agrawal et al. \cite{PZF} constructed a bivariate extension of the Bernstein-Schurer-Kantorovich operators and discussed the rate of convergence and the asymptotic approximation. \\

\section{Construction of the operators}
Let $I=[0,\infty)\times[0,\infty)$ and $\omega_\gamma(x)
=(1+x^{\gamma})^{-1}$ for $\gamma\in \mathbb{N}_0$(set of all non-negative integers).\\
Further, for fixed $\gamma_1,\gamma_2\in \mathbb{N}_0,$ let $\omega_{\gamma_1,\gamma_2}
(x,y)=\omega_{\gamma_1}(x)\omega_{\gamma_2}(y).$
Then, for $f\in C_{\gamma_1,\gamma_2}(I)
:= \{f\in C(I): \omega_{\gamma_1,\gamma_2}(x,y)f(x,y)\,\, \mbox{is bounded and uniformly continuous on I}\},$
we define a bivariate extension of the operators (\ref{e1}) as follows:
\begin{eqnarray}\label{e8}
K_{n_1,n_2}^a(f;x,y)=(n_1+1)(n_2+1)\sum_{k_{1}=0}^{\infty}\sum_{k_2=0}^{\infty}
W_{n_1,n_2,k_1,k_2}^a(x,y)\int_{\frac{k_2}{n_2+1}}^{\frac{k_2+1}{n_2+1}}
\int_{\frac{k_1}{n_1+1}}^{\frac{k_1+1}{n_1+1}} f(u,v)du\,dv,
\end{eqnarray}
where
\begin{eqnarray*}
W_{n_1,n_2,k_1,k_2}^a(x,y)=\frac{x^{k_1}y^{k_2}p_{k_1}(n,a)p_{k_2}(n,a)
e^{\frac{-ax}{1+x}}e^{\frac{-ay}{1+y}}}{k_1!k_2!(1+x)^{n_1+k_1}(1+y)^{n_2+k_2}}.
\end{eqnarray*}
If $f\in C_{\gamma_1,\gamma_2}(I)$ and if $f(x,y)=f_1(x)f_2(y)$ for all $(x,y)\in I,$ then
\begin{eqnarray}\label{p1}
K_{n_1,n_2}^a(f(u,v);x,y)=K_{n_1}^a(f_1(u);x)K_{n_2}^a(f_2(v);y),
\end{eqnarray}
for $(x,y)\in I$ and $n_1, n_2\in \mathbb{N}.$ The sup norm on $C_{\gamma_1,\gamma_2}(I)$ is given by
\begin{eqnarray}\label{m1}
\parallel f\parallel_{\gamma_1,\gamma_2}=\sup_{(x,y)\in I}|f(x,y)|\omega_{\gamma_1,\gamma_2}(x,y), f\in C_{\gamma_1,\gamma_2}(I).
\end{eqnarray}
For $f\in C_{\gamma_1,\gamma_2}(I),$ we define the modulus of continuity
\begin{eqnarray}\label{m2}
\omega (f,C_{\gamma_1,\gamma_2};t,s) := \displaystyle\sup_{0<h<t, 0< \delta<s} \parallel \Delta_{h,\delta}f(.,.)\parallel_{\gamma_1,\gamma_2}, \,\,\, t,s \geq 0,
\end{eqnarray}
where $\Delta_{h,\delta}f(x,y):= f(x+h,y+\delta)-f(x,y)$ for $(x,y)\in I$ and $h,\delta>0.$\\
Moreover, for fixed $m\in \mathbb{N},$
let $C_{\gamma_1,\gamma_2}^m(I)$ be the space of all functions
$f\in C_{\gamma_1,\gamma_2}(I)$ having the partial derivatives $\dfrac{\partial^k}
{\partial x^s\partial y^{k-s}}\in C_{\gamma_1,\gamma_2}(I), s=1,2,...,k ;\, k=1,2,...,m.$
The paper is organized as follows:\\
In Section 2 of this paper, we give some definitions and auxiliary results
and in Section 3, we prove the main results of this paper wherein we study the degree of approximation, Voronovskaja type theorems and simultaneous approximation of first order derivatives for the operators $K_{n_1,n_2}^a$.

\section{Auxiliary results}
\begin{lemma}\label{lm1}
In \cite{PNM}, For the $r$th order central moment of $K_n^{a},$ defined as
\begin{eqnarray*}
u_{n,r}^a(x):= K_n^a((t-x)^r;x),
\end{eqnarray*}
we have
\begin{enumerate}[(i)]
\item  $u_{n,0}^a(x)=1, u_{n,1}^a(x)=\dfrac{1}{n+1}\bigg(-x+\dfrac{ax}{1+x}+
\dfrac{1}{2}\bigg)$ \\and $u_{n,2}^a(x)=\dfrac{1}{(n+1)^2}\bigg\{(n+1)x^2+(n-1)x
+\dfrac{a^2x^2}{(1+x)^2}+2ax\left(\dfrac{1-x}{1+x}\right)+\dfrac{1}{3}\bigg\};$\\ \item
$u_{n,r}^a(x)$ is a rational function of $x$ depending on the parameters a and r;\\ \item
for each $x\in (0,\infty), u_{n,r}^a(x)=O\bigg(\dfrac{1}{n^{[\frac{r+1}{2}]}}\bigg),$ where $[\beta]$ denotes the integer part of $\beta.$
\end{enumerate}
\end{lemma}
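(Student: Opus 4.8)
The plan is to reduce everything to two facts about the basis functions $W_{n,k}^a$. The first is the normalization $\sum_{k=0}^{\infty}W_{n,k}^a(x)=1$, which follows from the generating identity $\sum_{k\ge 0}\frac{P_k(n,a)}{k!}\frac{x^k}{(1+x)^{n+k}}=e^{ax/(1+x)}$ (obtained by writing $P_k(n,a)=\sum_{i}\binom{k}{i}(n)_i a^{k-i}$, summing first over $k-i$ and then over $i$ to get $(1+x)^n e^{ax/(1+x)}$). The second is the differential identity coming from logarithmic differentiation of $W_{n,k}^a=e^{-ax/(1+x)}\frac{P_k(n,a)}{k!}\frac{x^k}{(1+x)^{n+k}}$, namely
\[
x(1+x)\frac{d}{dx}W_{n,k}^a(x)=\left(k-nx-\frac{ax}{1+x}\right)W_{n,k}^a(x).
\]
Every moment below is driven by this identity together with the elementary Kantorovich integral $\int_{k/(n+1)}^{(k+1)/(n+1)}t^r\,dt=\frac{1}{(r+1)(n+1)^{r+1}}\big[(k+1)^{r+1}-k^{r+1}\big]$.

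For part (i), expanding $(k+1)^{r+1}-k^{r+1}$ for $r=0,1,2$ expresses $K_n^a(t^r;x)$ as a linear combination of the discrete sums $T_j(x):=\sum_{k\ge 0}k^j W_{n,k}^a(x)$ with $j\le r$. Summing the differential identity over $k$ and using $\frac{d}{dx}\sum_k W_{n,k}^a=0$ gives $T_1(x)=nx+\frac{ax}{1+x}$; differentiating $T_1$ and substituting the identity back in gives $T_2(x)=\big(nx+\frac{ax}{1+x}\big)^2+nx(1+x)+\frac{ax}{1+x}$. Feeding $T_0,T_1,T_2$ into $K_n^a(1;x),K_n^a(t;x),K_n^a(t^2;x)$ and forming $u_{n,r}^a=\sum_{j=0}^{r}\binom{r}{j}(-x)^{r-j}K_n^a(t^j;x)$ yields, after routine simplification, the three stated closed forms.

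For part (ii), I would show by induction on $r$ that each $T_r$ is rational in $x$: differentiating $T_{r-1}$ and substituting the identity gives the recurrence $T_r=x(1+x)T_{r-1}'+\big(nx+\frac{ax}{1+x}\big)T_{r-1}$, and since rationality is preserved under differentiation, multiplication and addition, $T_0=1$ forces all $T_r$ rational; hence so are $K_n^a(t^r;x)$ and $u_{n,r}^a$. For part (iii) the cleanest route is to detach the Kantorovich integration: substituting $t=(k+u)/(n+1)$, $u\in[0,1]$, gives $u_{n,r}^a(x)=\sum_{j=0}^{r}\frac{\binom{r}{j}}{(j+1)(n+1)^j}D_{r-j}(x)$, where $D_p(x):=\sum_k W_{n,k}^a(x)\big(\frac{k}{n+1}-x\big)^p$ is the $p$th central moment of the underlying discrete operator. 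Writing $D_p=\tilde T_p/(n+1)^p$ with $\tilde T_p(x)=\sum_k W_{n,k}^a(x)\,(k-(n+1)x)^p$, the differential identity (now with $k-nx=(k-(n+1)x)+x$) produces the three-term recurrence $\tilde T_{p+1}=x(1+x)\tilde T_p'+p(n+1)x(1+x)\tilde T_{p-1}-\beta(x)\tilde T_p$, where $\beta(x)=x-\frac{ax}{1+x}$. Since $\tilde T_0=1$ and $\tilde T_1=-\beta$ carry no power of $n$, induction shows $\tilde T_p$ is a polynomial in $n$ of degree at most $[p/2]$ (the dominant contribution being the factor $(n+1)$ in the middle term), whence $\tilde T_p=O(n^{[p/2]})$ and $D_p=O(n^{-[(p+1)/2]})$. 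Then the $j$th summand above is $O\big(n^{-(j+[(r-j+1)/2])}\big)=O(n^{-[(r+1)/2]})$, the minimum being attained at $j=0$, which gives the claimed order.

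The main obstacle is the order bookkeeping in part (iii). One must verify that differentiating $\tilde T_p$ in $x$ does not raise its degree in $n$, which is precisely why I carry the stronger inductive hypothesis ``$\tilde T_p$ is a polynomial in $n$ of degree $\le[p/2]$'' rather than a bare $O$-estimate; and one must confirm the elementary inequality $j+[(r-j+1)/2]\ge[(r+1)/2]$ so that the detached Kantorovich corrections are genuinely of equal or lower order than the leading term $D_r$. Everything else reduces to bounded, if tedious, computation.
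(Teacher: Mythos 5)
The paper itself gives no proof of this lemma: it is imported verbatim from the authors' earlier work \cite{PNM}, so there is nothing in the present text to compare your argument against line by line. Your blind proof is correct and complete, and it follows the route that is standard for Baskakov-type operators (and is, in substance, what \cite{PNM} does): normalization $\sum_k W_{n,k}^a(x)=1$ via the generating identity for $P_k(n,a)$ (note you have silently corrected the paper's typo $\binom{n}{k}$ to $\binom{k}{i}$ in the definition of $P_k$, which is indeed the intended Mihesan form), the logarithmic-derivative identity $x(1+x)\frac{d}{dx}W_{n,k}^a(x)=\bigl(k-nx-\frac{ax}{1+x}\bigr)W_{n,k}^a(x)$ driving a recurrence for the discrete moments, and direct evaluation of the Kantorovich integrals. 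I checked the computations: $T_1=nx+\frac{ax}{1+x}$ and $T_2=\bigl(nx+\frac{ax}{1+x}\bigr)^2+nx(1+x)+\frac{ax}{1+x}$ reproduce exactly the stated $u_{n,1}^a$ and $u_{n,2}^a$; the recurrence $\tilde T_{p+1}=x(1+x)\tilde T_p'+p(n+1)x(1+x)\tilde T_{p-1}-\beta(x)\tilde T_p$ with $\tilde T_0=1$, $\tilde T_1=-\beta$ is right; and the strengthened inductive hypothesis (``polynomial in $n$ of degree at most $[p/2]$'') together with $j+[(r-j+1)/2]\ge[(r+1)/2]$ correctly closes the order estimate in (iii), since $p-[p/2]=[(p+1)/2]$. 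The one genuinely delicate point, that differentiating in $x$ cannot raise the degree in $n$, is exactly the point you isolate, so no gap remains.
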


\begin{lemma}\label{l3}
Let $e_{i,j}:I\rightarrow I, e_{i,j}=x^i y^j, 0\leq i,j \leq 2$ be two-dimensional test
functions. Then the bivariate operators defined in (\ref{e8}) satisfy
the following results:
\begin{enumerate}[(i)]
\item $K_{n_1,n_2}^a(e_{0,0};x,y)=1;$
\item $K_{n_1,n_2}^a(e_{1,0};x,y)=\dfrac{1}{n_1+1}\bigg(n_1 x
+\frac{ax}{1+x}+\frac{1}{2}\bigg);$
\item $K_{n_1,n_2}^a(e_{0,1};x,y)=\dfrac{1}{n_2+1}\bigg
(n_2 y+\frac{ay}{1+y}+\frac{1}{2}\bigg);$
\item $K_{n_1,n_2}^a(e_{2,0};x,y)=\dfrac{1}{(n_1+1)^2}\bigg(n_1^2x^2+n_1x^2+2n_1x
    +\dfrac{2an_1x^2}{1+x}+\dfrac{a^2x^2}{(1+x)^2}+\dfrac{2ax}{1+x}+\frac{1}{3}\bigg);$
\item $K_{n_1,n_2}^a(e_{0,2};x,y)=\dfrac{1}{(n_2+1)^2}\bigg(n_2^2y^2+n_2y^2+2n_2y
    +\dfrac{2an_2y^2}{1+y}+\frac{a^2y^2}{(1+y)^2}+\frac{2ay}{1+y}+\frac{1}{3}\bigg);$
\item $K_{n_1,n_2}^a(e_{3,0};x,y)= \dfrac{1}{(n_1+1)^3}\bigg(n_1^3 x^3+\dfrac{3n_1^2x^2}{2}(3+2x)+\dfrac{3n_1x^2}{2}+\dfrac{n_1x}{2}(4x^2+6x+5)+\dfrac{3ax^3n_1^2}{1+x}$\\
    $+\dfrac{3an_1x^2}{1+x}\bigg\{(3+x)+\dfrac{ax}{1+x}\bigg\}+\dfrac{ax}{1+x}
    \bigg\{\dfrac{7}{2}+\dfrac{7}{2}\dfrac{ax}{(1+x)}+\dfrac{a^2x^2}{(1+x)^2}\bigg\}+\dfrac{1}{4}\bigg);$\\
\item $K_{n_1,n_2}^a(e_{0,3};x,y)= \dfrac{1}{(n_2+1)^3}\bigg(n_2^3 y^3+\dfrac{3n_2^2y^2}{2}(3+2y)+\dfrac{3n_2y^2}{2}+\dfrac{n_2y}{2}(4y^2+6y+5)+\dfrac{3ay^3n_2^2}{1+y}$\\
    $+\dfrac{3an_2y^2}{1+y}\bigg\{(3+y)+\dfrac{ay}{1+y}\bigg\}+\dfrac{ay}{1+y}
    \bigg\{\dfrac{7}{2}+\dfrac{7}{2}\dfrac{ay}{(1+y)}+\dfrac{a^2y^2}{(1+y)^2}\bigg\}+\dfrac{1}{4}\bigg).$
\end{enumerate}
\end{lemma}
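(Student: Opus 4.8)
The plan is to exploit the tensor-product structure of the operators and then reduce everything to the univariate moments already essentially recorded in Lemma \ref{lm1}. Since every test function factorizes as $e_{i,j}(u,v)=u^{i}v^{j}$, the product identity (\ref{p1}) gives
\[
K_{n_1,n_2}^a(e_{i,j};x,y)=K_{n_1}^a(u^{i};x)\,K_{n_2}^a(v^{j};y).
\]
Thus it suffices to evaluate the univariate moments $K_n^a(t^{r};x)$ for $r=0,1,2,3$; the $y$-dependent factors in (iii), (v) and (vii) then follow from (ii), (iv) and (vi) merely by interchanging $(n_1,x)$ with $(n_2,y)$, while (i) is immediate because both univariate factors equal $1$.

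To obtain the ordinary moments from the central ones, I would expand $t^{r}=\sum_{j=0}^{r}\binom{r}{j}x^{r-j}(t-x)^{j}$ and use linearity of $K_n^a$ to write
\[
K_n^a(t^{r};x)=\sum_{j=0}^{r}\binom{r}{j}x^{r-j}u_{n,j}^a(x).
\]
For $r=1$ and $r=2$ the right-hand side involves only $u_{n,0}^a,u_{n,1}^a,u_{n,2}^a$, all given explicitly in Lemma \ref{lm1}(i); substituting these and collecting the rational terms in $x$ produces the formulas in (ii) and (iv) after routine simplification.

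The case $r=3$ requires $u_{n,3}^a$, whose closed form is not part of Lemma \ref{lm1} (only its order of magnitude is stated), so here I would compute $K_n^a(t^{3};x)$ directly from the definition (\ref{e1}). Using $\int_{k/(n+1)}^{(k+1)/(n+1)}t^{3}\,dt=\frac{(k+1)^{4}-k^{4}}{4(n+1)^{4}}$ and $(k+1)^{4}-k^{4}=4k^{3}+6k^{2}+4k+1$ reduces the task to the discrete moments $S_m(x):=\sum_{k=0}^{\infty}W_{n,k}^a(x)\,k^{m}$ for $m\le 3$. These I would read off from the generating function of the Mihesan weights: expressing $P_k(n,a)/k!$ as the Cauchy product $\sum_{i}\frac{(n)_i}{i!}\frac{a^{k-i}}{(k-i)!}$, one finds
\[
\sum_{k=0}^{\infty}W_{n,k}^a(x)\,u^{k}=(1+x-xu)^{-n}\exp\!\Big(\tfrac{ax(u-1)}{1+x}\Big),
\]
and $S_m$ is the value at $u=1$ obtained after applying the operator $u\,\frac{d}{du}$ exactly $m$ times. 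Alternatively, $S_0,S_1,S_2$ can be recovered from the already-established values of $K_n^a(t^{m};x)$, $m\le 2$, leaving only $S_3$ to be computed.

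The main obstacle is precisely this last computation of $S_3(x)$: it is the step that generates all the $a$-dependent contributions $\frac{ax}{1+x}$, $\frac{a^{2}x^{2}}{(1+x)^{2}}$ and $\frac{a^{3}x^{3}}{(1+x)^{3}}$ appearing in (vi), and keeping track of them correctly through the differentiations and the binomial reshuffling is where the algebra is heaviest. Lemma \ref{lm1}(ii) guarantees that the final answer is a rational function of $x$ depending only on $a$ and $r$, which at least confines the bookkeeping to a finite list of terms.
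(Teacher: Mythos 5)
Your plan is correct and is essentially the route the paper itself (tacitly) relies on: Lemma \ref{l3} is stated without proof, and the intended argument is exactly the factorization via (\ref{p1}) into univariate moments $K_{n}^a(t^r;x)$, with (i)--(v) recovered from the central moments of Lemma \ref{lm1}(i) through $K_n^a(t^r;x)=\sum_{j}\binom{r}{j}x^{r-j}u_{n,j}^a(x)$. I checked that this reproduces (ii) and (iv) exactly, and your generating function
\[
\sum_{k=0}^{\infty}W_{n,k}^a(x)u^k=(1+x-xu)^{-n}\exp\Big(\tfrac{ax(u-1)}{1+x}\Big)
\]
is correct (it gives $\sum_k W_{n,k}^a(x)=1$ at $u=1$, and its first two logarithmic-derivative moments reproduce (ii) and (iv) again), so the $r=3$ computation you outline is a complete and legitimate method, not a gap.

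One concrete point you should be aware of when you carry out the $r=3$ step: writing $b=\tfrac{ax}{1+x}$ and $S_m=\sum_k W_{n,k}^a(x)k^m$, the reduction gives $K_n^a(t^3;x)=\tfrac{1}{(n+1)^3}\big(S_3+\tfrac32 S_2+S_1+\tfrac14\big)$, and the factorial moments are $G'(1)=nx+b$, $G''(1)=n(n+1)x^2+2nxb+b^2$, $G'''(1)=n(n+1)(n+2)x^3+3n(n+1)x^2b+3nxb^2+b^3$. Assembling these yields the coefficient $\tfrac72$ for the term $n x$ (not $\tfrac52$, as the factor $4x^2+6x+5$ in (vi) would give) and the coefficient $\tfrac92$ for the term $b^2$ (not the printed $\tfrac72\tfrac{ax}{1+x}$ inside the last brace); all other terms of (vi) match. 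The same check at $a=0$ against the classical Baskakov moments $E[k]=nx$, $E[k^2]=n(n+1)x^2+nx$, $E[k^3]=n(n+1)(n+2)x^3+3n(n+1)x^2+nx$ confirms the $\tfrac72 nx$ coefficient. So your method is sound and, executed faithfully, it actually exposes what appear to be two misprints in the statement of (vi) (and symmetrically in (vii)); do not force your algebra to match the printed constants.
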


\begin{lemma}\label{l4}
For $n_1, n_2\in \mathbb{N},$ we have
\begin{enumerate}[(i)]
\item $K_{n_1,n_2}^a(u-x;x,y)=\dfrac{1}{n_1+1}\bigg(-x+\dfrac{ax}{1+x}+\dfrac{1}{2}\bigg);$
\item $K_{n_1,n_2}^a(v-y;x,y)=\dfrac{1}{n_2+1}\bigg(-y+\dfrac{ay}{1+y}+\dfrac{1}{2}\bigg);$
\item $K_{n_1,n_2}^a((u-x)^2;x,y)=\dfrac{1}{(n_1+1)^2}\bigg((n_1+1)x^2+(n_1-1)x
+\dfrac{a^2x^2}{(1+x)^2}+2ax\left(\dfrac{1-x}{1+x}\right)+\dfrac{1}{3}\bigg);$
\item $K_{n_1,n_2}^a((v-y)^2;x,y)=\dfrac{1}{(n_2+1)^2}\bigg((n_2+1)y^2+(n_2-1)y
+\dfrac{a^2y^2}{(1+y)^2}+2ay\left(\dfrac{1-y}{1+y}\right)+\dfrac{1}{3}\bigg).$
\end{enumerate}
\end{lemma}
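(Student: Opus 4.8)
The plan is to exploit the tensor-product structure of the bivariate operator together with the univariate central moments already recorded in Lemma~\ref{lm1}. The key observation is that each integrand $(u-x)$ and $(u-x)^2$ depends only on the first variable $u$, with $x$ treated as a fixed parameter; hence each factors as $g_1(u)\cdot g_2(v)$ with $g_2\equiv 1$. Applying the product identity~(\ref{p1}) then gives
\begin{eqnarray*}
K_{n_1,n_2}^a((u-x)^r;x,y)=K_{n_1}^a((u-x)^r;x)\cdot K_{n_2}^a(1;y).
\end{eqnarray*}

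Next I would invoke Lemma~\ref{lm1}(i), which asserts $u_{n,0}^a(x)=K_n^a(1;x)=1$; in particular $K_{n_2}^a(1;y)=1$. Therefore the displayed quantity collapses to the univariate central moment $u_{n_1,r}^a(x)$. For part (i) I take $r=1$ and read off $u_{n_1,1}^a(x)$ from Lemma~\ref{lm1}(i) with $n$ replaced by $n_1$; for part (iii) I take $r=2$ and use the stated formula for $u_{n_1,2}^a(x)$. Parts (ii) and (iv) are entirely symmetric: one factors with respect to the second variable instead, so that $K_{n_1}^a(1;x)=1$ and the expression reduces to $u_{n_2,1}^a(y)$ and $u_{n_2,2}^a(y)$, respectively.

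There is essentially no genuine obstacle in this argument, since the factorization reduces every quantity to the one-dimensional moments already computed in Lemma~\ref{lm1}. If one preferred to avoid~(\ref{p1}), an alternative is to expand $(u-x)^2=u^2-2xu+x^2$ and apply linearity with the explicit bivariate moments $K_{n_1,n_2}^a(e_{2,0};x,y)$, $K_{n_1,n_2}^a(e_{1,0};x,y)$ and $K_{n_1,n_2}^a(e_{0,0};x,y)$ from Lemma~\ref{l3}(iv),(ii),(i); this route is correct but requires a short algebraic simplification to bring the result into the stated form, whereas the factorization above yields the identities immediately.
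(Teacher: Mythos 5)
Your proposal is correct and coincides with the derivation the paper implicitly relies on: the paper states Lemma~\ref{l4} without proof, and the intended justification is exactly your reduction via the product property~(\ref{p1}) (with $g_2\equiv 1$ and $K_{n_2}^a(1;y)=1$) to the univariate central moments $u_{n_1,r}^a(x)$, $u_{n_2,r}^a(y)$ of Lemma~\ref{lm1}, whose stated formulas match the lemma verbatim. Your alternative route through Lemma~\ref{l3} and linearity also checks out after the short simplification you mention.
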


\begin{remark}\label{r1}
For every $x\in [0,\infty)$ and $n\in\mathbb{N},$ we have
\begin{eqnarray*}
K_{n_1,n_2}^a((u-x)^2;x,y)\leq \frac{\{\delta_{n_1}^a(x)\}^2}{n_1+1},
\end{eqnarray*}
where $ \{\delta_{n_1}^a(x)\}^2=\phi^2(x)+\frac{(1+a)^2}{n_1+1}$ and $\phi(x)=\sqrt{x(1+x)}.$
\end{remark}
\begin{proof}
From Lemma \ref{l4} (iii), we have
\begin{eqnarray*}
K_{n_1,n_2}^a((u-x)^2;x,y)&\leq&\frac{(n_1+1) x^2+n_1 x}{(n_1+1)^2}+\frac{1}{(n_1+1)^2}\bigg(\frac{a^2x^2}{(1+x)^2}+\frac{2ax}{1+x}+\frac{1}{3}\bigg)\\ &\leq&\frac{1}{n_1+1}\bigg(x(1+x)+\frac{(1+a)^2}{n_1+1}\bigg)=\frac{\{\delta_{n_1}^{a}(x)\}^2}{n_1+1}.
\end{eqnarray*}
\end{proof}

\begin{lemma}\label{l7}
For every $\gamma_1\in \mathbb{N}_0$ there exist positive constants $M_k(\gamma_1), k=1,2$ such that
\begin{enumerate}[(i)]
\item $\omega_{\gamma_1}(x) K_n^a\bigg(\dfrac{1}{\omega_{\gamma_1}(t)};x\bigg)\leq M_1(\gamma_1),$\\
\item $\omega_{\gamma_1}(x) K_n^a\bigg(\dfrac{(t-x)^2}{\omega_{\gamma_1}(t)};x\bigg)\leq M_2(\gamma_1)\dfrac{\{\delta_{n}^a(x)\}^2}{n+1},$
\end{enumerate}
for all $x\in \mathbb{R}_0=\mathbb{R}_+\cup \{0\},\mathbb{R}_+ = (0,\infty)$ and $n\in \mathbb{N}.$
\end{lemma}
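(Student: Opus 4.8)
The plan is to reduce both inequalities to uniform-in-$n$ growth estimates for the ordinary and central moments of $K_n^a$, using throughout that $\dfrac{1}{\omega_{\gamma_1}(t)}=1+t^{\gamma_1}$.

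For part (i), by linearity and $K_n^a(1;x)=1$ I would write
\[
\omega_{\gamma_1}(x)\,K_n^a\!\left(\tfrac{1}{\omega_{\gamma_1}(t)};x\right)=\frac{1+K_n^a(t^{\gamma_1};x)}{1+x^{\gamma_1}},
\]
so it suffices to prove $K_n^a(t^{\gamma_1};x)\le C_{\gamma_1}(1+x^{\gamma_1})$ uniformly in $n$ and $x$. Expanding $t^{\gamma_1}=\sum_{j=0}^{\gamma_1}\binom{\gamma_1}{j}x^{\gamma_1-j}(t-x)^j$ gives $K_n^a(t^{\gamma_1};x)=\sum_{j=0}^{\gamma_1}\binom{\gamma_1}{j}x^{\gamma_1-j}u_{n,j}^a(x)$. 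By Lemma \ref{lm1} each central moment $u_{n,j}^a(x)$ is a rational function whose polynomial part has degree at most $j$ with coefficients bounded uniformly in $n$, while its genuinely rational pieces (such as $\frac{ax}{1+x}$ and $\frac{a^2x^2}{(1+x)^2}$) stay bounded on $[0,\infty)$. Hence $x^{\gamma_1-j}u_{n,j}^a(x)\le C(1+x^{\gamma_1})$ for each $j$, and summing over $j$ yields the required bound and the constant $M_1(\gamma_1)$.

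For part (ii), I would split $\dfrac{(t-x)^2}{\omega_{\gamma_1}(t)}=(t-x)^2+(t-x)^2 t^{\gamma_1}$. The first summand is handled directly by Remark \ref{r1}, which gives $K_n^a((t-x)^2;x)=u_{n,2}^a(x)\le \frac{\{\delta_n^a(x)\}^2}{n+1}$. For the cross term I would apply the Cauchy--Schwarz inequality for the positive operator $K_n^a$,
\[
K_n^a\!\left((t-x)^2 t^{\gamma_1};x\right)\le\Big\{K_n^a\big((t-x)^4;x\big)\Big\}^{1/2}\Big\{K_n^a\big(t^{2\gamma_1};x\big)\Big\}^{1/2},
\]
and then use the bound from part (i) in the form $K_n^a(t^{2\gamma_1};x)\le C(1+x^{2\gamma_1})\le C(1+x^{\gamma_1})^2$ together with a fourth-central-moment estimate $K_n^a((t-x)^4;x)\le C\big(\tfrac{\{\delta_n^a(x)\}^2}{n+1}\big)^2$. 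These combine to give $K_n^a((t-x)^2 t^{\gamma_1};x)\le C(1+x^{\gamma_1})\frac{\{\delta_n^a(x)\}^2}{n+1}$; adding the first summand (and using $\frac{\{\delta_n^a(x)\}^2}{n+1}\le (1+x^{\gamma_1})\frac{\{\delta_n^a(x)\}^2}{n+1}$) and dividing through by $1+x^{\gamma_1}$ produces the claimed constant $M_2(\gamma_1)$.

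The main obstacle is the \emph{uniformity} of these moment estimates. Lemma \ref{lm1}(iii) only records the pointwise-in-$x$ order $u_{n,r}^a(x)=O(n^{-[(r+1)/2]})$, so I cannot simply quote it; instead I must track the joint dependence on $x$ and $n$ and verify that in each ordinary or central moment the top-degree term in $x$ carries an $n$-power that exactly cancels the Kantorovich normalization, leaving a bounded coefficient, while the rational pieces remain bounded. The sharpest instance is the fourth-central-moment bound $K_n^a((t-x)^4;x)\le C(\frac{\{\delta_n^a(x)\}^2}{n+1})^2$: verifying it requires computing $u_{n,4}^a(x)$ (which is not listed in Lemma \ref{lm1}) and confirming that its leading behaviour $\sim x^4/(n+1)^2$ matches that of $(\frac{\{\delta_n^a(x)\}^2}{n+1})^2$. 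This routine but lengthy computation is where the real work lies.
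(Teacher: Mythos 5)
The paper states Lemma \ref{l7} without any proof (it is quoted as a known estimate and used only inside the proof of Lemma \ref{l5}), so there is no in-paper argument to compare yours against; your plan has to stand on its own. On its own terms it is the standard and essentially correct route: reduce (i) to a uniform bound $K_n^a(t^{\gamma_1};x)\le C(1+x^{\gamma_1})$ via the binomial expansion in central moments, and reduce (ii) to Remark \ref{r1} plus Cauchy--Schwarz, using (i) for $K_n^a(t^{2\gamma_1};x)$ and a fourth-central-moment estimate. Two remarks. First, a small imprecision in (i): it is not true that all the ``genuinely rational pieces'' of $u_{n,j}^a(x)$ are bounded on $[0,\infty)$ --- already in $u_{n,2}^a$ the term $2ax\frac{1-x}{1+x}$ grows like $x$ --- but this is harmless because each such piece still has growth of order at most $x^{j}$, so the degree count $x^{\gamma_1-j}u_{n,j}^a(x)=O(1+x^{\gamma_1})$ closes anyway; you should state the needed property as $|u_{n,j}^a(x)|\le C_j\,(1+x^j)/(n+1)$ (say) rather than as boundedness of the rational parts. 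Second, the gaps you flag are genuine: Lemma \ref{lm1}(iii) is only pointwise in $x$, so both the uniform structure of $u_{n,j}^a$ and the bound $K_n^a((t-x)^4;x)\le C\big(\{\delta_n^a(x)\}^2/(n+1)\big)^2$ must be verified by explicit computation (they do hold --- the leading behaviour $x^2(1+x)^2/(n+1)^2$ of the fourth central moment matches the square of $\{\delta_n^a(x)\}^2/(n+1)$, and the subleading terms such as $x(1+x)^3/(n+1)^3$ are absorbed after splitting the cases $x\le 1$ and $x\ge 1$). Since the paper supplies none of this, writing out those moment computations is exactly the work that would be required to make the lemma rigorous, and your outline identifies it correctly.
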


\begin{lemma}\label{l5}
For every $\gamma_1,\gamma_2\in \mathbb{N}_0$ there exist positive constant $M_3(\gamma_1,\gamma_2),$ such that
\begin{eqnarray}\label{m0}
\parallel K_{n_1,n_2}^a(f;.,.)\parallel_{\gamma_1,\gamma_2}\leq M_3(\gamma_1,\gamma_2)\parallel f\parallel_{\gamma_1,\gamma_2}
\end{eqnarray}
for every $f\in C_{\gamma_1,\gamma_2}(I)$ and for all $n_1, n_2\in \mathbb{N}.$
\end{lemma}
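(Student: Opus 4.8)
The plan is to exploit the positivity of $K_{n_1,n_2}^a$ together with the product structure (\ref{p1}) and reduce everything to the univariate weight estimate in Lemma \ref{l7}(i). Since the kernels $W_{n_1,n_2,k_1,k_2}^a$ are nonnegative and the operator acts by integration against a nonnegative measure, $K_{n_1,n_2}^a$ is a positive linear operator, so that $|K_{n_1,n_2}^a(f;x,y)| \leq K_{n_1,n_2}^a(|f|;x,y)$ for every $f \in C_{\gamma_1,\gamma_2}(I)$. This positivity is what will let me pass from $f$ to a simple dominating function.

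First I would bound the integrand pointwise. From the definition (\ref{m1}) of the norm we have, for all $(u,v) \in I$,
\[
|f(u,v)| \leq \frac{\|f\|_{\gamma_1,\gamma_2}}{\omega_{\gamma_1,\gamma_2}(u,v)} = \|f\|_{\gamma_1,\gamma_2}\,\frac{1}{\omega_{\gamma_1}(u)}\cdot\frac{1}{\omega_{\gamma_2}(v)}.
\]
Applying positivity and monotonicity of the operator then yields
\[
|K_{n_1,n_2}^a(f;x,y)| \leq \|f\|_{\gamma_1,\gamma_2}\, K_{n_1,n_2}^a\!\left(\frac{1}{\omega_{\gamma_1}(u)}\cdot\frac{1}{\omega_{\gamma_2}(v)};x,y\right).
\]
Next I would note that the dominating function $\frac{1}{\omega_{\gamma_1}(u)}\cdot\frac{1}{\omega_{\gamma_2}(v)} = (1+u^{\gamma_1})(1+v^{\gamma_2})$ is of the product form $f_1(u)f_2(v)$ and lies in $C_{\gamma_1,\gamma_2}(I)$, since its product with $\omega_{\gamma_1,\gamma_2}$ is identically $1$. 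Hence the multiplicative property (\ref{p1}) applies and factorizes the right-hand side:
\[
K_{n_1,n_2}^a\!\left(\frac{1}{\omega_{\gamma_1}(u)}\cdot\frac{1}{\omega_{\gamma_2}(v)};x,y\right) = K_{n_1}^a\!\left(\frac{1}{\omega_{\gamma_1}(u)};x\right) K_{n_2}^a\!\left(\frac{1}{\omega_{\gamma_2}(v)};y\right).
\]

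I would then multiply through by $\omega_{\gamma_1,\gamma_2}(x,y) = \omega_{\gamma_1}(x)\omega_{\gamma_2}(y)$ and group the factors, so that each bracket
\[
\omega_{\gamma_1}(x) K_{n_1}^a\!\left(\frac{1}{\omega_{\gamma_1}(u)};x\right), \qquad \omega_{\gamma_2}(y) K_{n_2}^a\!\left(\frac{1}{\omega_{\gamma_2}(v)};y\right)
\]
is bounded by $M_1(\gamma_1)$ and $M_1(\gamma_2)$ respectively, thanks to Lemma \ref{l7}(i). Taking the supremum over $(x,y) \in I$ will then give (\ref{m0}) with the explicit, $n$-independent constant $M_3(\gamma_1,\gamma_2) = M_1(\gamma_1)M_1(\gamma_2)$.

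The argument is essentially routine once the univariate estimate is in hand, and I do not expect any serious analytic obstacle, since the genuinely hard work of controlling the univariate operator against the polynomial weight has already been carried out in Lemma \ref{l7}. The only point that will need care is verifying that the dominating weight truly factorizes and belongs to $C_{\gamma_1,\gamma_2}(I)$, so that the product identity (\ref{p1}) is legitimately applicable; this is exactly the step that ties the bivariate bound to the univariate one.
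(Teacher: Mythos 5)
Your proposal is correct and follows essentially the same route as the paper: both bound $|f(u,v)|$ by $\parallel f\parallel_{\gamma_1,\gamma_2}/\omega_{\gamma_1,\gamma_2}(u,v)$, use positivity, factorize $K_{n_1,n_2}^a(1/\omega_{\gamma_1,\gamma_2};x,y)$ via the product property (\ref{p1}), and apply Lemma \ref{l7}(i) to each univariate factor before taking the supremum. Your version is, if anything, slightly more explicit about the positivity step and the admissibility of the dominating weight, which the paper leaves implicit.
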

\begin{proof}
From equation (\ref{p1}) and Lemma \ref{l7}, we get
\begin{eqnarray}\label{eq1}
\omega_{\gamma_1,\gamma_2}(x,y)K_{n_1,n_2}^a\bigg(\frac{1}{\omega_{\gamma_1,\gamma_2}(u,v)};x,y\bigg)&=&
\bigg(\omega_{\gamma_1}(x)K_{n_1}^a\bigg(\frac{1}{\omega_{\gamma_1}(u)};x\bigg)\bigg)
\bigg(\omega_{\gamma_2}(y)K_{n_2}^a\bigg(\frac{1}{\omega_{\gamma_2}(v)};y\bigg)\bigg)\nonumber\\
&\leq& \left\Vert K_{n_1}^a\bigg(\frac{1}{\omega_{\gamma_1}(u)};.\bigg)\right\Vert_{\gamma_1,\gamma_2} \times\left\Vert K_{n_2}^a\bigg(\frac{1}{\omega_{\gamma_1}(v)};.\bigg)\right\Vert_{\gamma_1,\gamma_2},
\end{eqnarray}
for all $(x,y)\in I$ and $n_1,n_2\in\mathbb{N}.$\\
Taking supremum on the left side of the inequality of (\ref{eq1}) and using (\ref{m1}), we obtain
\begin{eqnarray}\label{eq2}
\left\Vert K_{n_1,n_2}^a\bigg(\frac{1}{\omega_{\gamma_1,\gamma_2}(u,v)};.,.\bigg)\right\Vert_{\gamma_1,\gamma_2}\leq M_4(\gamma_1,\gamma_2).
\end{eqnarray}
Now,
\begin{eqnarray*}
\parallel K_{n_1,n_2}^a(f;.,.)\parallel_{\gamma_1,\gamma_2}&\leq& \parallel f \parallel_{\gamma_1,\gamma_2}
\left\Vert K_{n_1,n_2}^a\bigg(\frac{1}{\omega_{\gamma_1,\gamma_2}(u,v)};.,.\bigg)\right\Vert_{\gamma_1,\gamma_2}.
\end{eqnarray*}
From (\ref{eq2}), we get the desired result.
\end{proof}

\section{Main results}
\subsection{Local approximation}
For $f\in C_{B}(I)$ (the space of all bounded and uniformly continuous functions on $I,$
let $C_{B}^2(I) =\{f\in C_{B}(I):f^{(p,q)}\in C_{B}(I), \,1\leq p,q\leq 2\},$
where $f^{(p,q) }$ is $(p,q)$th-order partial derivative with respect to $x,y$ of $f,$ equipped with the norm
\begin{eqnarray*}
||f|| _{C_{B}^2(I)}=||f||_{C_{B}(I)}+\sum_{i=1}^2\left\| \frac{\partial^i f}{\partial x^i}\right\|+\sum_{i=1}^2\left\| \frac{\partial^i f}{\partial y^i}\right\|.
\end{eqnarray*}
The Peetre's $K-$functional of the function $f\in C_{B}(I)$ is given by
$$\mathcal{K}(f;\delta)=\inf_{g\in{C_{B}^2(I)}}\{||f-g||_{C_{B}(I)}+\delta||g||_{C_{B}^2(I)},\delta>0\}.$$
It is also known that the following inequality
\begin{eqnarray}\label{fr1}
\mathcal{K}(f;\delta)\leq M_{1}\{\omega_{2}(f;\sqrt{\delta })+\min(1,\delta )||f||_{C_B(I) }\}
\end{eqnarray}
holds for all $\delta>0$ (\cite{BB}, page 192). The constant $M_{1}$ is independent of $\delta $ and $f$ and $\omega_2(f;\sqrt{\delta})$ is the second order modulus of continuity which is defined in a similar manner as the second order modulus of continuity for one variable case:
\begin{eqnarray*}
\omega_2(f,\sqrt{\delta})=\displaystyle\sup_{0<h\leq\sqrt{\delta}}\displaystyle\sup_{x,x+2h\in [0,\infty)}|f(x+2h)-2f(x+h)+f(x)|.
\end{eqnarray*}
For $f\in C_B(I),$ the complete modulus of continuity for bivariate case is defined as follows:\\
$\omega(f;\delta)=\sup\bigg\{|f(u,v)-f(x,y)|:(u,v),(x,y)\in I$ and $\sqrt{(u-x)^2+(v-y)^2}\leq\delta\bigg\}.$\\
Further, the partial moduli of continuity  with respect to $x$ and $y$ is defined as\\
$\omega_{b}(f;\delta)=\sup\bigg\{|f(x_1,y)-f(x_2,y)|:y\in \mathbb{R}_0\,\ and\,\ |x_1-x_2|\leq\delta\bigg\},$\\
$\omega_{c}(f;\delta)=\sup\bigg\{|f(x,y_1)-f(x,y_2)|:x\in \mathbb{R}_0\,\ and\,\ |y_1-y_2|\leq\delta\bigg\}.$\\
It is clear that they satisfy the properties of the usual modulus of continuity.
The details of the modulus of continuity for the bivariate case can be found in \cite{AG}.\\

Now, we find the order of approximation of the sequence $K_{n_1,n_2}^a(f;x,y)$ to the function $f(x;y)\in C_{B}^2(I)$ by Petree's K-functional.

\begin{thm}\label{t11}
For the function $f\in C_B(I),$ the following inequality
\begin{eqnarray*}
|K_{n_1,n_2}^a(f;x,y)-f(x,y)|&\leq&4\mathcal{K}(f;M_{n_1,n_2}(x,y))\\&&+\omega\bigg(f;\sqrt{\bigg(\frac{1}{n_1+1}\bigg(-x+\frac{ax}{1+x}+\frac{1}{2}\bigg)\bigg)^2
+\bigg(\frac{1}{n_2+1}\bigg(-y+\frac{ay}{1+y}+\frac{1}{2}\bigg)\bigg)^2}\,\,  \bigg)\\
&\leq&M\bigg\{\omega_2\bigg(f;\sqrt{M_{n_1,n_2}(x,y)\bigg)}+\mbox{min}\{1,M_{n_1,n_2}(x,y)\}||f||_{C_B^2(I)}\bigg\}\\
&&+\omega\bigg(f;\sqrt{\bigg(\frac{1}{n_1+1}\bigg(-x+\frac{ax}{1+x}+\frac{1}{2}\bigg)\bigg)^2
+\bigg(\frac{1}{n_2+1}\bigg(-y+\frac{ay}{1+y}+\frac{1}{2}\bigg)\bigg)^2}\,\,  \bigg)
\end{eqnarray*}
holds. The constant $M\geq0$ is independent of $f$ and $M_{n_1,n_2}(x,y),$\\ where $M_{n_1,n_2}(x,y)=\dfrac{\{\delta_{n_1}^a(x)\}^2}{n_1+1}+\dfrac{\{\delta_{n_2}^a(y)\}^2}{n_2+1}.$
\end{thm}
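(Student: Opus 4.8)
The plan is to use the standard Peetre $K$-functional technique combined with an intermediate auxiliary operator that corrects for the non-vanishing first moments. Looking at Lemma 4, the first moments $K_{n_1,n_2}^a(u-x;x,y)$ and $K_{n_1,n_2}^a(v-y;x,y)$ do not vanish (they equal $\frac{1}{n_1+1}(-x+\frac{ax}{1+x}+\frac{1}{2})$ and its $y$-analogue). This is the reason the bound contains an extra $\omega(f;\cdot)$ term built precisely from these first moments. So the first step is to introduce a modified operator that has correct (zero) first moments, and estimate the error between $f$ and $g\in C_B^2(I)$ through it.

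**First I would** define the auxiliary operator
\begin{eqnarray*}
\widehat{K}_{n_1,n_2}^a(f;x,y) := K_{n_1,n_2}^a(f;x,y) - f\big(\alpha_{n_1}(x),\beta_{n_2}(y)\big) + f(x,y),
\end{eqnarray*}
where $\alpha_{n_1}(x)=\frac{1}{n_1+1}\big(n_1x+\frac{ax}{1+x}+\frac12\big)=K_{n_1,n_2}^a(e_{1,0};x,y)$ and $\beta_{n_2}(y)=K_{n_1,n_2}^a(e_{0,1};x,y)$ are the images of the coordinate functions from Lemma \ref{l3}. By construction $\widehat{K}_{n_1,n_2}^a$ reproduces constants and satisfies $\widehat{K}_{n_1,n_2}^a(u-x;x,y)=0$ and $\widehat{K}_{n_1,n_2}^a(v-y;x,y)=0$, which is exactly what makes the linear term drop out in the Taylor expansion below.

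**Next**, for $g\in C_B^2(I)$ I would expand $g$ by Taylor's formula about $(x,y)$ with integral remainder, apply $\widehat{K}_{n_1,n_2}^a$, use the vanishing of the first moments to kill the gradient term, and bound the second-order remainder by $\|g\|_{C_B^2(I)}$ times a sum of second central moments. Using Remark \ref{r1} (which controls $K_{n_1,n_2}^a((u-x)^2;x,y)$ by $\{\delta_{n_1}^a(x)\}^2/(n_1+1)$, and the $y$-analogue), together with the mixed term handled via Cauchy--Schwarz, yields $|\widehat{K}_{n_1,n_2}^a(g;x,y)-g(x,y)|\le C\,M_{n_1,n_2}(x,y)\,\|g\|_{C_B^2(I)}$. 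For $f$ itself, the three extra summands in $\widehat{K}_{n_1,n_2}^a$ give $|\widehat{K}_{n_1,n_2}^a(f;x,y)|\le 3\|f\|_{C_B(I)}$, so $|\widehat{K}_{n_1,n_2}^a(f-g;x,y)-(f-g)(x,y)|\le 4\|f-g\|_{C_B(I)}$. Combining these and taking the infimum over $g$ produces the factor $4\mathcal{K}(f;M_{n_1,n_2}(x,y))$. Finally, I reintroduce the correction term by estimating $|f(\alpha_{n_1}(x),\beta_{n_2}(y))-f(x,y)|\le \omega(f;\delta)$ with $\delta=\sqrt{(\alpha_{n_1}(x)-x)^2+(\beta_{n_2}(y)-y)^2}$, which is precisely the argument of the stated complete modulus of continuity since $\alpha_{n_1}(x)-x=\frac{1}{n_1+1}(-x+\frac{ax}{1+x}+\frac12)$. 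The second inequality in the statement then follows by inserting the $K$-functional bound \eqref{fr1}.

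**The main obstacle** will be the bookkeeping in the second-order Taylor estimate: one must correctly bound the mixed partial contribution and the remainder so that everything collapses into the single quantity $M_{n_1,n_2}(x,y)=\frac{\{\delta_{n_1}^a(x)\}^2}{n_1+1}+\frac{\{\delta_{n_2}^a(y)\}^2}{n_2+1}$, and one must verify that applying $\widehat{K}_{n_1,n_2}^a$ to the remainder (which involves evaluation at the shifted point $(\alpha_{n_1}(x),\beta_{n_2}(y))$) does not introduce terms outside this quantity. The Cauchy--Schwarz step bounding $K_{n_1,n_2}^a(|u-x||v-y|;x,y)$ by $\big(K^a((u-x)^2)\big)^{1/2}\big(K^a((v-y)^2)\big)^{1/2}$ is routine but is where the product structure \eqref{p1} and Remark \ref{r1} are essential; everything else is a transcription of the classical one-variable Peetre argument to the bivariate setting.
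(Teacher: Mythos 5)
Your proposal follows the paper's proof essentially step for step: the same auxiliary operator annihilating the first moments, the same Taylor-plus-Peetre decomposition producing $4\|f-g\|_{C_B(I)}+2M_{n_1,n_2}(x,y)\|g\|_{C_B^2(I)}$, and the same $\omega(f;\cdot)$ correction for the shifted evaluation point, so the argument is correct. The one small difference is that the paper expands $g$ axis-wise (in $u$ at fixed second argument, then in $v$), so no mixed-partial term arises at all; this sidesteps the fact that $\|g_{xy}\|$ is not included in the norm $\|\cdot\|_{C_B^2(I)}$ as defined, whereas your Cauchy--Schwarz treatment of the mixed term would implicitly require it.
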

\begin{proof}
We define the auxiliary operators as follows:
\begin{eqnarray}\label{pk1}
\overline K_{n_1,n_2}^a(f;x,y)=K_{n_1,n_2}^a(f;x,y)-f\bigg(\frac{1}{n_1+1}\bigg(n_1 x
+\frac{ax}{1+x}+\frac{1}{2}\bigg),\frac{1}{n_2+1}\bigg(n_2 y
+\frac{ay}{1+y}+\frac{1}{2}\bigg)\bigg)+ f(x,y).
\end{eqnarray}
Then, from Lemma \ref{l4}, we have\\
$\overline K_{n_1,n_2}^a((u-x);x,y)=0$  and $\overline K_{n_1,n_2}^a((v-y);x,y)=0.$\\
Let $g\in C_{B}^2(I)$ and $(u,v)\in I.$ Using the Taylor's theorem, we have
\begin{eqnarray}\label{mg1}
g(u,v)-g(x,y)&=&\frac{\partial g(x,y)}{\partial x}(u-x)+\int_{x}^u(u-\alpha)\frac{\partial^2 g(\alpha,y)}{\partial \alpha^2}d\alpha+\frac{\partial g(x,y)}{\partial y}(v-y)\nonumber\\&&+\int_{y}^v(v-\beta)\frac{ \partial^2 g(x,\beta)}{\partial \beta^2}d\beta.
\end{eqnarray}
Operating $\overline K_{n_1,n_2}^a$ on both sides of (\ref{mg1}), we get
\begin{eqnarray*}
\overline K_{n_1,n_2}^a(g;x,y)-g(x,y)&=&\overline K_{n_1,n_2}^a\bigg( \int_{x}^u(u-\alpha)\frac{\partial^2 g(\alpha,y)}{\partial \alpha^2}d\alpha;x,y\bigg)\\&&+\overline K_{n_1,n_2}^a\bigg(\int_{y}^v(v-\beta)\frac{ \partial^2 g(x,\beta)}{\partial \beta^2}d\beta;x,y\bigg)\\
&=&K_{n_1,n_2}^a\bigg(\int_{x}^u(u-\alpha)\frac{\partial^2 g(\alpha,y)}{\partial \alpha^2}d\alpha;x,y\bigg)\\&&-\int_{x}^{\frac{1}{n_1+1}\bigg(n_1 x
+\frac{ax}{1+x}+\frac{1}{2}\bigg)}\bigg(\frac{1}{n_1+1}\bigg(n_1 x
+\frac{ax}{1+x}+\frac{1}{2}\bigg)-\alpha\bigg)\frac{\partial^2 g(\alpha,y)}{\partial \alpha^2}d\alpha\\&&+K_{n_1,n_2}^a
\bigg(\int_{y}^v(v-\beta)\frac{ \partial^2 g(x,\beta)}{\partial \beta^2}d\beta;x,y\bigg)\\&&-\int_{y}^{\frac{1}{n_2+1}\bigg(n_2 y
+\frac{ay}{1+y}+\frac{1}{2}\bigg)}\bigg(\frac{1}{n_2+1}\bigg(n_2 y
+\frac{ay}{1+y}+\frac{1}{2}\bigg)-\beta\bigg)\frac{\partial^2 g(x,\beta)}{\partial \beta^2}d\beta.
\end{eqnarray*}
Hence,
\begin{eqnarray*}
|\overline K_{n_1,n_2}^a(g;x,y)-g(x,y)|&\leq &K_{n_1,n_2}^a\bigg(\bigg|\int_{x}^u|u-\alpha| \bigg|\frac{\partial^2 g(\alpha,y)}{\partial \alpha^2}\bigg|d\alpha\bigg|;x,y\bigg)\\&&+\bigg|\int_{x}^{\frac{1}{n_1+1}\bigg(n_1 x
+\frac{ax}{1+x}+\frac{1}{2}\bigg)}\bigg|{\frac{1}{n_1+1}\bigg(n_1 x
+\frac{ax}{1+x}+\frac{1}{2}\bigg)}-\alpha\bigg|\bigg|\frac{\partial^2 g(\alpha,y)}{\partial \alpha^2}\bigg|d\alpha\bigg|\\&&+K_{n_1,n_2}^a\bigg(\bigg|\int_{y}^v|v-\beta|\bigg|\frac{\partial^2 g(x,\beta)}{\partial \beta^2}\bigg|d\beta\bigg|;x,y\bigg)\\&&+\bigg|\int_{y}^{\frac{1}{n_2+1}\bigg(n_2 y
+\frac{ay}{1+y}+\frac{1}{2}\bigg)}\bigg|{\frac{1}{n_2+1}\bigg(n_2 y
+\frac{ay}{1+y}+\frac{1}{2}\bigg)}-\beta\bigg|\bigg|\frac{\partial^2 g(x,\beta)}{\partial \beta^2}\bigg|d\beta\bigg|\\
&\leq&\bigg\{K_{n_1,n_2}^a((u-x)^2;x,y)+\bigg(\frac{1}{n_2+1}\bigg(n_2 y
+\frac{ay}{1+y}+\frac{1}{2}\bigg)-x\bigg)^2\bigg\}||g||_{C_{B}^2(I)}\\&&+
\bigg\{K_{n_1,n_2}^a((v-y)^2;x,y)+\bigg(\frac{1}{n_2+1}\bigg(n_2 y
+\frac{ay}{1+y}+\frac{1}{2}\bigg)-y\bigg)^2\bigg\}||g||_{C_{B}^2(I)}\\
&\leq&\bigg\{\frac{1}{n_1+1}\bigg(\phi(x)+\frac{(1+a)^2}{n_1+1}\bigg)+\bigg(\frac{1}{n_1+1}
\bigg(-x+\frac{ax}{1+x}+\frac{1}{2}\bigg)\bigg)^2\\&&+\frac{1}{n_2+1}\bigg(\phi(y)
+\frac{(1+a)^2}{n_2+1}\bigg)+\bigg(\frac{1}{n_2+1}\bigg(-y+\frac{ay}{1+y}+\frac{1}{2}\bigg)\bigg)^2\bigg\}||g||_{C_{B}^2(I)}.
\end{eqnarray*}
Thus, we get
\begin{eqnarray*}
|\overline K_{n_1,n_2}^a(g;x,y)-g(x,y)|&\leq&\bigg\{\frac{2}{n_1+1}\bigg(\phi(x)+\frac{(1+a)^2}{n_1+1}\bigg)
+\frac{2}{n_2+1}\bigg(\phi(y)+\frac{(1+a)^2}{n_2+1}\bigg)\bigg\}||g||_{C_{B}^2(I)}.
\end{eqnarray*}
Also,
\begin{eqnarray}\label{mg2} |\overline K_{n_1,n_2}^a(f;x,y)|&\leq&|K_{n_1,n_2}^a(f;x,y)|+\bigg|f\bigg(\frac{1}{n_1+1}\bigg(n_1 x
+\frac{ax}{1+x}+\frac{1}{2}\bigg),\frac{1}{n_2+1}\bigg(n_2 y+\frac{ay}{1+y}+\frac{1}{2}\bigg)\bigg)\bigg|+ |f(x,y)|\nonumber\\
&\leq&3||f||_{C_{B}(I)}.
\end{eqnarray}
Now, from equation (\ref{mg2}), we get
\begin{eqnarray*}
|K_{n_1,n_2}^a(f;x,y)-f(x,y)|&\leq&|\overline K_{n_1,n_2}^a(f-g;x,y)|+|\overline K_{n_1,n_2}^a(g;x,y)-g(x,y)|+|g(x,y)-f(x,y)|\\&&+\bigg|f\bigg(\frac{1}{n_1+1}\bigg(n_1 x
+\frac{ax}{1+x}+\frac{1}{2}\bigg),\frac{1}{n_2+1}\bigg(n_2 y+\frac{ay}{1+y}+\frac{1}{2}\bigg)\bigg)-f(x,y)\bigg|\\
&\leq&3||f-g||_{C_B(I)}+||f-g||_{C_B(I)}+|\overline K_{n_1,n_2}^a(g;x,y)-g(x,y)|\\&&+\bigg|f\bigg(\frac{1}{n_1+1}\bigg(n_1 x
+\frac{ax}{1+x}+\frac{1}{2}\bigg),\frac{1}{n_2+1}\bigg(n_2 y+\frac{ay}{1+y}+\frac{1}{2}\bigg)\bigg)-f(x,y)\bigg|\\
&\leq&4||f-g||_{C_B(I)}+\bigg\{\frac{2}{n_1+1}\bigg(\phi(x)+\frac{(1+a)^2}{n_1+1}\bigg)
+\frac{2}{n_2+1}\bigg(\phi(y)+\frac{(1+a)^2}{n_2+1}\bigg)\bigg\}||g||_{C_{B}^2(I)}\\&&+\bigg|f\bigg(\frac{1}{n_1+1}\bigg(n_1 x
+\frac{ax}{1+x}+\frac{1}{2}\bigg),\frac{1}{n_2+1}\bigg(n_2 y+\frac{ay}{1+y}+\frac{1}{2}\bigg)\bigg)-f(x,y)\bigg|\\
&\leq&\bigg(4||f-g||_{C_B(I)}+2M_{n_1,n_2}(x,y)||g||_{C_{B}^2(I)}\bigg)\\
&&+\omega\bigg(f;\sqrt{\bigg(\frac{1}{n_1+1}\bigg(-x+\frac{ax}{1+x}+\frac{1}{2}\bigg)\bigg)^2
+\bigg(\frac{1}{n_2+1}\bigg(-y+\frac{ay}{1+y}+\frac{1}{2}\bigg)\bigg)^2} \,\,  \bigg).
\end{eqnarray*}
Taking the infimum  on the right hand side over all $g\in C_{B}^2(I)$ and using (\ref{fr1}), we obtain
\begin{eqnarray*}
|K_{n_1,n_2}^a(f;x,y)-f(x,y)|&\leq&4\mathcal{K}(f;M_{n_1,n_2}(x,y))\\&&+\omega\bigg(f;\sqrt{\bigg(\frac{1}{n_1+1}\bigg(-x+\frac{ax}{1+x}+\frac{1}{2}\bigg)\bigg)^2
+\bigg(\frac{1}{n_2+1}\bigg(-y+\frac{ay}{1+y}+\frac{1}{2}\bigg)\bigg)^2}\,\,  \bigg)\\
&\leq&M\bigg\{\omega_2\bigg(f;\sqrt{M_{n_1,n_2}(x,y)\bigg)}+\mbox{min}\{1,M_{n_1,n_2}(x,y)\}||f||_{C_B^2(I)}\bigg\}\\
&&+\omega\bigg(f;\sqrt{\bigg(\frac{1}{n_1+1}\bigg(-x+\frac{ax}{1+x}+\frac{1}{2}\bigg)\bigg)^2
+\bigg(\frac{1}{n_2+1}\bigg(-y+\frac{ay}{1+y}+\frac{1}{2}\bigg)\bigg)^2}\,\,  \bigg).
\end{eqnarray*}
Hence, the proof is completed.
\end{proof}

\subsection{Rate of convergence of bivariate operators}
\begin{thm}\label{thm5}
Suppose that $f\in C_{\gamma_1, \gamma_2}^{1}(I)$ with $\gamma_1, \gamma_2 \in \mathbb{N}_0$ then there exist a positive constant $M_5(\gamma_1, \gamma_2)$ such that for all $(x,y)\in I$ and $n_1,n_2 \in \mathbb{N}$
\begin{eqnarray*}
\omega_{\gamma_1,\gamma_2}(x,y)|K_{n_1,n_2}^a(f;x,y)-f(x,y)|\leq M_5(\gamma_1, \gamma_2)\bigg\{\parallel f_x^{\prime}\parallel_{\gamma_1, \gamma_2}\frac{\delta_{n_1}^a(x)}{\sqrt{n_1+1}}+\parallel f_y^{\prime}\parallel_{\gamma_1, \gamma_2}\frac{\delta_{n_2}^a(y)}{\sqrt{n_2+1}}\bigg\}.
\end{eqnarray*}
\end{thm}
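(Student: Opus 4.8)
The plan is to reduce the assertion to the second-moment estimates already packaged in Remark \ref{r1} and Lemma \ref{l7}, by writing the difference $f(u,v)-f(x,y)$ through the first-order partial derivatives of $f$. Since $K_{n_1,n_2}^a$ is positive and reproduces constants (Lemma \ref{l3}(i)), I would start from
\[
\omega_{\gamma_1,\gamma_2}(x,y)\,|K_{n_1,n_2}^a(f;x,y)-f(x,y)|\leq \omega_{\gamma_1,\gamma_2}(x,y)\,K_{n_1,n_2}^a\bigl(|f(u,v)-f(x,y)|;x,y\bigr).
\]
Applying the mean value theorem one variable at a time, I write $f(u,v)-f(x,y)=f'_x(\xi,v)(u-x)+f'_y(x,\eta)(v-y)$ with $\xi$ between $x$ and $u$ and $\eta$ between $y$ and $v$. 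This separates the right-hand side into an ``$x$-part'' and a ``$y$-part''; by the symmetry of both the operator (through \eqref{p1}) and the target inequality, it suffices to estimate the $x$-part in full and obtain the $y$-part by interchanging the indices.

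For the $x$-part I would use the definition \eqref{m1} to get $|f'_x(\xi,v)|\leq \|f'_x\|_{\gamma_1,\gamma_2}\,\omega_{\gamma_1,\gamma_2}(\xi,v)^{-1}$. The decisive elementary step is controlling the weight at the intermediate point $\xi$: since $0\leq\xi\leq\max(x,u)$ and $\max(x,u)^{\gamma_1}\leq x^{\gamma_1}+u^{\gamma_1}$, one obtains
\[
\frac{1}{\omega_{\gamma_1}(\xi)}\leq \frac{1}{\omega_{\gamma_1}(x)}+\frac{1}{\omega_{\gamma_1}(u)}.
\]
Substituting this and factoring the operator through the product structure \eqref{p1}, the $x$-part splits into two pieces, each carrying the harmless factor $\omega_{\gamma_2}(y)K_{n_2}^a(1/\omega_{\gamma_2}(v);y)\leq M_1(\gamma_2)$ supplied by Lemma \ref{l7}(i).

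The first piece reduces, after cancelling $\omega_{\gamma_1}(x)^{-1}$ against $\omega_{\gamma_1}(x)$, to $K_{n_1}^a(|u-x|;x)$, which by the Cauchy--Schwarz inequality for the positive operator $K_{n_1}^a$ (using $K_{n_1}^a(1;x)=1$, Lemma \ref{lm1}(i)) is at most $\sqrt{K_{n_1}^a((u-x)^2;x)}\leq \delta_{n_1}^a(x)/\sqrt{n_1+1}$ by Remark \ref{r1}. The second piece is $\omega_{\gamma_1}(x)\,K_{n_1}^a\bigl(|u-x|/\omega_{\gamma_1}(u);x\bigr)$; here I would apply Cauchy--Schwarz with factors $(u-x)^2/\omega_{\gamma_1}(u)$ and $1/\omega_{\gamma_1}(u)$, distribute $\omega_{\gamma_1}(x)=\sqrt{\omega_{\gamma_1}(x)}\cdot\sqrt{\omega_{\gamma_1}(x)}$ across the two square roots, and invoke Lemma \ref{l7}(ii) and (i) to reach the bound $\sqrt{M_1(\gamma_1)M_2(\gamma_1)}\,\delta_{n_1}^a(x)/\sqrt{n_1+1}$. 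Summing the two pieces produces $\|f'_x\|_{\gamma_1,\gamma_2}\,\delta_{n_1}^a(x)/\sqrt{n_1+1}$ up to a constant depending only on $\gamma_1,\gamma_2$; the $y$-part is identical with the indices swapped, and absorbing every constant into a single $M_5(\gamma_1,\gamma_2)$ finishes the proof.

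The step I expect to demand the most care is the decoupling of the displacement $|u-x|$ from the weight $\omega_{\gamma_1}(u)^{-1}$ at the integration point. Expanding $\omega_{\gamma_1}(\xi)^{-1}$ about $x$ alone would manufacture higher-order moments such as $K_{n_1}^a(|u-x|^{\gamma_1+1};x)$, which do not collapse to the clean rate $\delta_{n_1}^a(x)/\sqrt{n_1+1}$. The additive splitting $\omega_{\gamma_1}(\xi)^{-1}\leq\omega_{\gamma_1}(x)^{-1}+\omega_{\gamma_1}(u)^{-1}$ followed by Cauchy--Schwarz is precisely what keeps every term at the level of the second moment, where Lemma \ref{l7} delivers the required $\delta_{n_1}^a$-estimate.
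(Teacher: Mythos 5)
Your proposal is correct and follows essentially the same route as the paper: the authors also split $f(u,v)-f(x,y)$ into an $x$-part and a $y$-part via first-order partials (they use the integral form $\int_x^t f_u'(u,z)\,du$ rather than the mean value theorem, which yields the identical bound $\|f_x'\|_{\gamma_1,\gamma_2}\bigl(\omega_{\gamma_1,\gamma_2}(t,z)^{-1}+\omega_{\gamma_1,\gamma_2}(x,z)^{-1}\bigr)|t-x|$), then factor through the product structure \eqref{p1} and apply the Cauchy--Schwarz inequality together with Lemma \ref{l7} and Remark \ref{r1} exactly as you describe. The step you flag as delicate -- the additive splitting of the weight at the intermediate point so that only second moments appear -- is precisely the device used in the paper.
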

\begin{proof}
Let $(x,y)\in I$ be a fixed point. Then, we have
\begin{eqnarray}\label{p3}
f(t,z)-f(x,y)=\int_{x}^tf_u^{\prime}(u,z)du+\int_{y}^zf_v^{\prime}(x,v)dv\,\,\,\,\, \mbox{for}\,\,\, (t,z)\in I\nonumber\\
K_{n_1,n_2}^a(f(t,z);x,y)-f(x,y) = K_{n_1,n_2}^a\bigg(\int_{x}^tf_u^{\prime}(u,z)du;x,y\bigg)+K_{n_1,n_2}^a\bigg(\int_{y}^zf_v^{\prime}(x,v)dv;x,y\bigg).
\end{eqnarray}
Now, by using (\ref{m1}), we get
\begin{eqnarray*}
\bigg|\int_{x}^tf_u^{\prime}(u,z)du\bigg|\leq \parallel f_x^{\prime}\parallel_{\gamma_1,\gamma_2}
\bigg|\int_x^t\frac{du}{\omega_{\gamma_1,\gamma_2}(u,z)}\bigg|\leq \parallel f_x^{\prime}
\parallel_{\gamma_1,\gamma_2}\bigg(\frac{1}{\omega_{\gamma_1,\gamma_2}(t,z)}
+\frac{1}{\omega_{\gamma_1,\gamma_2}(x,z)}\bigg)|t-x|,
\end{eqnarray*}
and analogously
\begin{eqnarray*}
\bigg|\int_{y}^zf_v^{\prime}(x,v)dv\bigg|\leq \parallel f_y^{\prime}
\parallel_{\gamma_1,\gamma_2}\bigg(\frac{1}{\omega_{\gamma_1,\gamma_2}(x,z)}
+\frac{1}{\omega_{\gamma_1,\gamma_2}(x,y)}\bigg)|z-y|.
\end{eqnarray*}
By using these inequalities and from (\ref{p1}), we obtain for $n_1, n_2\in \mathbb{N}$\\
\noindent
$\omega_{\gamma_1,\gamma_2}(x,y)\bigg|K_{n_1,n_2}^a\bigg(\displaystyle
\int_{x}^t f_u^{\prime}(u,z)du;x,y\bigg)\bigg|$
\begin{eqnarray}
&\leq& \omega_{\gamma_1,\gamma_2}(x,y)K_{n_1,n_2}^a\bigg
(\bigg|\int_{x}^tf_u^{\prime}(u,z)du\bigg|;x,y\bigg)\nonumber\\&\leq&\parallel
f_x^{\prime}\parallel_{\gamma_1,\gamma_2}\omega_{\gamma_1,\gamma_2}(x,y)
\bigg\{K_{n_1,n_2}^a\bigg(\frac{|t-x|}{\omega_{\gamma_1,\gamma_2}(t,z)};x,y\bigg)
+K_{n_1,n_2}^a\bigg(\frac{|t-x|}{\omega_{\gamma_1,\gamma_2}(x,z)};x,y\bigg)\bigg\}\nonumber\\
&=&\parallel f_x^{\prime}\parallel_{\gamma_1,\gamma_2}\omega_{\gamma_2}(y)K_{n_2}^a\bigg(\frac{1}
{\omega_{\gamma_2}(z)};y\bigg)\bigg\{\omega_{\gamma_1}(x)K_{n_1}^a\bigg(\frac{|t-x|}
{\omega_{\gamma_1}(t)};x\bigg)+K_{n_1}^a(|t-x|;x)\bigg\},
\end{eqnarray}
and analogously\\
\noindent
$\omega_{\gamma_1,\gamma_2}(x,y)\bigg|K_{n_1,n_2}^a\bigg(\displaystyle
\int_{y}^z f_v^{\prime}(x,v)dv;x,y\bigg)\bigg|$
\begin{eqnarray}
\leq \parallel f_y^{\prime}\parallel_{\gamma_1,\gamma_2}
\bigg\{\omega_{\gamma_2}(y)K_{n_2}^a\bigg(\frac{|z-y|}
{\omega_{\gamma_2}(z)};y\bigg)+K_{n_2}^a(|z-y|;y)\bigg\}.
\end{eqnarray}
By the H$\ddot{o}$lder inequality and Remark \ref{r1}, we get for $n_1\in \mathbb{N}$
\begin{eqnarray}
K_{n_1}^a(|t-x|;x)\leq \{K_{n_1}^a((t-x)^2;x)K_{n_1}^a(1;x)\}^{1/2} \leq \frac{\delta_{n_1}^a(x)}{\sqrt{n_1+1}}
\end{eqnarray}
and\\
\noindent
$\omega_{\gamma_1}(x)K_{n_1}^a \bigg(\frac{|t-x|}{\omega_{\gamma_1}(t)};x\bigg)$
\begin{eqnarray}
\leq \omega_{\gamma_1}(x)\bigg\{K_{n_1}^a \bigg(\frac{(t-x)^2}{\omega_{\gamma_1}(t)};x\bigg)
K_{n_1}^a \bigg(\frac{1}{\omega_{\gamma_1}(t)};x\bigg)\bigg\}^{1/2}\leq M_6(\gamma_1) \frac{\delta_{n_1}^a(x)}{\sqrt{n_1+1}}, \mbox{in view of Lemma \ref{l7}}.
\end{eqnarray}
Analogously for $n_2\in\mathbb{N},$ we have
\begin{eqnarray}
K_{n_2}^a(|z-y|;y)\leq \frac{\delta_{n_2}^a(y)}{\sqrt{n_2+1}},
\end{eqnarray}
and
\begin{eqnarray}\label{m7}
\omega_{\gamma_2}(y)K_{n_2}^a \bigg(\frac{|z-y|}{\omega_{\gamma_2}(z)};y\bigg)\leq M_7(\gamma_2)\frac{\delta_{n_2}^a(y)}{\sqrt{n_2+1}}.
\end{eqnarray}
From equations (\ref{p3})-(\ref{m7}), we obtain\\
$\omega_{\gamma_1,\gamma_2}(x,y)\mid K_{n_1,n_2}^a(f(t,z);x,y)-f(x,y)
\mid\leq M_8(\gamma_1,\gamma_2)\bigg\{\parallel f_x^{\prime}\parallel_{\gamma_1,\gamma_2}
\dfrac{\delta_{n_1}^a(x)}{\sqrt{n_1+1}}+\parallel f_y^{\prime}\parallel_{\gamma_1,\gamma_2}\dfrac{\delta_{n_2}^a(y)}{\sqrt{n_2+1}}\bigg\},$\\
for all $n_1,n_2\in \mathbb{N}.$ Thus the proof is completed.
\end{proof}

\begin{thm}\label{thm7}
Suppose that $f\in C_{\gamma_1,\gamma_2}(I)$ with some $\gamma_1,\gamma_2\in \mathbb{N}_0.$ Then there exists a positive constant $M_9(\gamma_1,\gamma_2)$ such that
\begin{eqnarray*}
\omega_{\gamma_1,\gamma_2}(x,y)\mid K_{n,n}^a(f(t,z);x,y)-f(x,y)\mid\leq M_9(\gamma_1,\gamma_2)\omega\bigg(f;C_{\gamma_1,\gamma_2};\frac{\delta_{n_1}^a(x)}{\sqrt{n_1+1}},\frac{\delta_{n_2}^a(y)}{\sqrt{n_2+1}}\bigg),
\end{eqnarray*}
for all $(x,y)\in I$ and $n_1,n_2\in \mathbb{N}.$
\end{thm}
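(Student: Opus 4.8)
The plan is to imitate the structure of the proof of Theorem \ref{thm5}, replacing the mean-value splitting of $f$ by a direct appeal to the weighted bivariate modulus $\omega(f;C_{\gamma_1,\gamma_2};\cdot,\cdot)$. Since $K_{n_1,n_2}^a(e_{0,0};x,y)=1$ by Lemma \ref{l3}(i), I would first use linearity to write
\[
K_{n_1,n_2}^a(f;x,y)-f(x,y)=K_{n_1,n_2}^a\big(f(t,z)-f(x,y);x,y\big),
\]
so that after multiplying by $\omega_{\gamma_1,\gamma_2}(x,y)$ and using positivity of the operator it suffices to estimate $\omega_{\gamma_1,\gamma_2}(x,y)K_{n_1,n_2}^a(|f(t,z)-f(x,y)|;x,y)$, the weight being a constant with respect to the integration variables $(t,z)$.

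The decisive pointwise step is the estimate of $\omega_{\gamma_1,\gamma_2}(x,y)|f(t,z)-f(x,y)|$. Reading off the definition (\ref{m2}), for positive increments one has $\omega_{\gamma_1,\gamma_2}(x,y)|f(x+h,y+\delta)-f(x,y)|\le\omega(f;C_{\gamma_1,\gamma_2};h,\delta)$; the crucial feature is that the weight is anchored at the base point $(x,y)$ at which the operator is evaluated, so — unlike in Theorem \ref{thm5} — no factor $1/\omega_{\gamma_1,\gamma_2}(t,z)$ is generated and Lemma \ref{l7} is not required here. I would then invoke the standard scaling property of the bivariate modulus, $\omega(f;C_{\gamma_1,\gamma_2};\lambda_1\delta_1,\lambda_2\delta_2)\le(1+\lambda_1)(1+\lambda_2)\,\omega(f;C_{\gamma_1,\gamma_2};\delta_1,\delta_2)$ for $\lambda_1,\lambda_2\ge0$, with $\lambda_i$ chosen so that $\lambda_1\delta_1=|t-x|$ and $\lambda_2\delta_2=|z-y|$, to obtain for every $(t,z)\in I$
\[
\omega_{\gamma_1,\gamma_2}(x,y)|f(t,z)-f(x,y)|\le\Big(1+\tfrac{|t-x|}{\delta_1}\Big)\Big(1+\tfrac{|z-y|}{\delta_2}\Big)\omega(f;C_{\gamma_1,\gamma_2};\delta_1,\delta_2).
\]

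Next I would apply $K_{n_1,n_2}^a$ to this bound. Because the right-hand integrand factorises, the product identity (\ref{p1}) splits the estimate into a product of one-dimensional operators, giving the prefactor
\[
K_{n_1}^a\Big(1+\tfrac{|t-x|}{\delta_1};x\Big)K_{n_2}^a\Big(1+\tfrac{|z-y|}{\delta_2};y\Big).
\]
Each univariate factor is controlled exactly as in Theorem \ref{thm5}: by the Cauchy--Schwarz inequality together with $K_{n_1}^a(e_{0,0};x)=1$ and Remark \ref{r1} one has $K_{n_1}^a(|t-x|;x)\le\delta_{n_1}^a(x)/\sqrt{n_1+1}$, and similarly for the $y$-factor. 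Choosing $\delta_1=\delta_{n_1}^a(x)/\sqrt{n_1+1}$ and $\delta_2=\delta_{n_2}^a(y)/\sqrt{n_2+1}$ makes each factor at most $2$, so the prefactor is at most $4$ and the argument of the modulus becomes precisely $\big(\delta_{n_1}^a(x)/\sqrt{n_1+1},\,\delta_{n_2}^a(y)/\sqrt{n_2+1}\big)$, yielding the claim with $M_9(\gamma_1,\gamma_2)=4$.

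The main obstacle, and the only point needing care, is the pointwise modulus estimate of the second paragraph: one must verify that the weight is correctly positioned at the base point and handle the signs of $t-x$ and $z-y$ (the increments in (\ref{m2}) are positive), so that $\omega$ is read as a function of $|t-x|$ and $|z-y|$. Once the scaling property of $\omega(f;C_{\gamma_1,\gamma_2};\cdot,\cdot)$ is granted, the remaining computations are the same Cauchy--Schwarz and product-structure arguments already carried out for Theorem \ref{thm5}.
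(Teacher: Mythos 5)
Your plan founders at the step you yourself flag as ``the only point needing care'': the scaling property $\omega(f;C_{\gamma_1,\gamma_2};\lambda_1\delta_1,\lambda_2\delta_2)\le(1+\lambda_1)(1+\lambda_2)\,\omega(f;C_{\gamma_1,\gamma_2};\delta_1,\delta_2)$ is \emph{false} for the weighted modulus (\ref{m2}) when $\gamma_i\ge 1$, and with it your pointwise bound $\omega_{\gamma_1,\gamma_2}(x,y)|f(t,z)-f(x,y)|\le(1+|t-x|/\delta_1)(1+|z-y|/\delta_2)\,\omega(f;C_{\gamma_1,\gamma_2};\delta_1,\delta_2)$. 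The scaling property rests on subadditivity in the increment, and subadditivity fails here because the weight in $\|\Delta_{h,\delta}f\|_{\gamma_1,\gamma_2}$ is anchored at the base point of each increment: telescoping $N$ small steps from $x$ to $t>x$ places the $j$-th weight at $x+jh$, and the ratio $\omega_{\gamma_1}(x)/\omega_{\gamma_1}(x+jh)=(1+(x+jh)^{\gamma_1})/(1+x^{\gamma_1})$ is unbounded. Concretely, take $\gamma_1=2$, $\gamma_2=0$ and $f(x,y)=x^2$, which lies in $C_{2,0}(I)$; since $\Delta_{h,\delta}f=2xh+h^2$, one finds $\omega(f;C_{2,0};\delta_1,\delta_2)=O(\delta_1)$, so at $x=0$ your right-hand side is $O(t)$ while the left-hand side equals $t^2/2$. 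For the same reason your assertion that ``no factor $1/\omega_{\gamma_1,\gamma_2}(t,z)$ is generated and Lemma \ref{l7} is not required'' is wrong: a correct telescoping gives $\sum_{j}(1+(x+jh)^{\gamma_1})\le N(1+t^{\gamma_1})$ for $t>x$, i.e.\ precisely a factor $1/\omega_{\gamma_1}(t)$ inside the operator, and controlling $\omega_{\gamma_1}(x)K_{n_1}^a\big(|t-x|/\omega_{\gamma_1}(t);x\big)$ is exactly what Lemma \ref{l7} is for. Your final constant $M_9=4$, independent of $\gamma_1,\gamma_2$, is another symptom that the weight has been lost.

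The paper circumvents the missing scaling property by a Steklov regularization: the mean $f_{h,\delta}$ in (\ref{p4}) involves only increments $u\in(0,h)$, $v\in(0,\delta)$, so the estimates (\ref{m3})--(\ref{m5}) for $\|f_{h,\delta}-f\|_{\gamma_1,\gamma_2}$ and for the weighted norms of $\partial_x f_{h,\delta}$, $\partial_y f_{h,\delta}$ follow from the definition (\ref{m2}) with no iteration of increments at all. Then $K_{n_1,n_2}^af-f$ is split as $K_{n_1,n_2}^a(f-f_{h,\delta})+\big(K_{n_1,n_2}^af_{h,\delta}-f_{h,\delta}\big)+\big(f_{h,\delta}-f\big)$; the first term is handled by the boundedness Lemma \ref{l5}, the middle term by applying Theorem \ref{thm5} to $f_{h,\delta}\in C^1_{\gamma_1,\gamma_2}(I)$, and finally $h=\delta_{n_1}^a(x)/\sqrt{n_1+1}$, $\delta=\delta_{n_2}^a(y)/\sqrt{n_2+1}$ are chosen. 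If you want to salvage a direct argument, you must prove a weighted analogue of the scaling inequality that carries the extra factor $1/\omega_{\gamma_1,\gamma_2}(t,z)$ and then invoke Lemma \ref{l7} as in Theorem \ref{thm5}; the Steklov route avoids that bookkeeping entirely.
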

\begin{proof}
Let $f_{h,\delta}$ be the Steklov function of $f\in C_{\gamma_1,\gamma_2}(I),$ defined by the formula
\begin{eqnarray}\label{p4}
f_{h,\delta}(x,y):= \frac{1}{h \delta}\int_{0}^{h} du\int_{0}^\delta f(x+u,y+v)dv,
\end{eqnarray}
$(x,y)\in I$ and $h,\delta\in\mathbb{R}_{+}.$ From (\ref{p4}) it follows that
\begin{eqnarray*}
f_{h,\delta}(x,y)-f(x,y)&=&\frac{1}{h \delta}\int_{0}^{h} du\int_{0}^\delta \Delta_{u,v}f(x,y)dv,\\
\frac{\partial}{\partial x}  f_{h,\delta}(x,y)&=&\frac{1}{h \delta} \int_{0}^\delta \Delta_{h,0}f(x,y+v)dv,\\
&=& \frac{1}{h \delta} \int_{0}^\delta (\Delta_{h,v}f(x,y)-\Delta_{0,v}f(x,y))dv,\\
\frac{\partial}{\partial y}  f_{h,\delta}(x,y)&=&\frac{1}{h \delta}\int_{0}^{h}\Delta_{0,\delta}f(x+u,y)du\\
&=&\frac{1}{h \delta} \int_{0}^\delta (\Delta_{u,\delta}f(x,y)-\Delta_{u,0}f(x,y))du.
\end{eqnarray*}
Thus, from (\ref{m1}) and (\ref{m2}) we obtain
\begin{eqnarray}\label{m3}
\parallel f_{h,\delta}-f\parallel_{\gamma_1,\gamma_2}&\leq& \omega(f, C_{\gamma_1,\gamma_2}; h,\delta),
\end{eqnarray}
\begin{eqnarray}\label{m4}
\bigg\| \frac{\partial f_{h,\delta}}{\partial x}\bigg\|_{\gamma_1,\gamma_2}&\leq& 2 h^{-1}\omega(f, C_{\gamma_1,\gamma_2}; h,\delta),
\end{eqnarray}
\begin{eqnarray}\label{m5}
\bigg\| \frac{\partial f_{h,\delta}}{\partial y}\bigg\|_{\gamma_1,\gamma_2}&\leq& 2 \delta^{-1}\omega(f, C_{\gamma_1,\gamma_2}; h,\delta).
\end{eqnarray}
For $h,\delta\in \mathbb{R}_+,$ we can write
\begin{eqnarray}\label{m6}
\omega_{\gamma_1,\gamma_2}(x,y)\mid K_{n_1,n_2}^a(f(t,z);x,y)-f(x,y)\mid&\leq&\omega_{\gamma_1,\gamma_2}(x,y)\{K_{n_1,n_2}^a(f(t,z)-f_{h,\delta}(t,z);x,y)\nonumber\\&&
+\mid K_{n_1,n_2}^a(f_{h,\delta}(t,z);x,y)-f_{h,\delta}(x,y)\mid\\&&
+\mid f_{h,\delta}(x,y)-f(x,y)\mid\}:=R_1+R_2+R_3\nonumber.
\end{eqnarray}
By (\ref{m1}), Lemma \ref{l5} and (\ref{m3}) it follows that
\begin{eqnarray*}
R_1\leq\parallel K_{n,n}^a(f-f_{h,\delta;.,.})  \parallel_{\gamma_1,\gamma_2}&\leq& M_{10}(\gamma_1,\gamma_2)\|f-f_{h,\delta}\|_{\gamma_1,\gamma_2}\\
&\leq& M_{10}(\gamma_1,\gamma_2)\omega(f,C_{\gamma_1,\gamma_2};h,\delta),
\end{eqnarray*}
and
$$R_3\leq\omega(f,C_{\gamma_1,\gamma_2};h,\delta).$$\\
By using Theorem \ref{thm5} and (\ref{m4}) and (\ref{m5}), we get
\begin{eqnarray*}
R_2&\leq &M_{11}(\gamma_1,\gamma_2)\bigg\{\bigg\| \frac{\partial f_{h,\delta}}{\partial x}\bigg\|_{\gamma_1,\gamma_2}\frac{\delta_{n_1}^a(x)}{\sqrt{n_1+1}}+\bigg\| \frac{\partial f_{h,\delta}}{\partial y}\bigg\|_{\gamma_1,\gamma_2}\frac{\delta_{n_2}^a(y)}{\sqrt{n_2+1}}\bigg\}\\
&\leq&2M_{11}(\gamma_1,\gamma_2)\omega(f,C_{\gamma_1,\gamma_2};h,\delta)\bigg\{h^{-1}
\frac{\delta_{n_1}^a(x)}{\sqrt{n_1+1}}+\delta^{-1}\frac{\delta_{n_2}^a(y)}{\sqrt{n_2+1}}\bigg\}.
\end{eqnarray*}
Consequently, we drive from (\ref{m6})
\begin{eqnarray*}
\omega_{\gamma_1,\gamma_2}(x,y)\mid K_{n_1,n_2}^a(f(t,z);x,y)-f(x,y)\mid\leq M_{12}(\gamma_1,\gamma_2)\omega(f,C_{\gamma_1,\gamma_2};h,\delta)\bigg\{1+h^{-1}
\frac{\delta_{n_1}^a(x)}{\sqrt{n_1+1}}+\delta^{-1}\frac{\delta_{n_2}^a(y)}{\sqrt{n_2+1}}\bigg\}
\end{eqnarray*}
for all $(x,y)\in I,n_1,n_2\in\mathbb{N}$ and $h,\delta\in \mathbb{R}_+.$ On choosing $h=\dfrac{\delta_{n_1}^a(x)}{\sqrt{n_1+1}}$ and $\delta=\dfrac{\delta_{n_2}^a(y)}{\sqrt{n_2+1}},$ we immediately obtain the required result.
\end{proof}

As a consequence of Theorem \ref{thm7}, we have
\begin{thm}\label{thm8}
Let $f\in C_{\gamma_1,\gamma_2}(I)$ with some $\gamma_1,\gamma_2\in \mathbb{N}_0.$ Then for every $(x,y)\in I,$
\begin{eqnarray*}
\displaystyle \lim_{n_1, n_2\rightarrow \infty}K_{n_1,n_2}^a(f;x,y)= f(x,y).
\end{eqnarray*}
\end{thm}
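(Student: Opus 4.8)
The plan is to obtain the convergence directly from the quantitative estimate of Theorem \ref{thm7}, treating it as a purely elementary consequence. First I would fix an arbitrary point $(x,y)\in I$ and note that the weight $\omega_{\gamma_1,\gamma_2}(x,y)=(1+x^{\gamma_1})^{-1}(1+y^{\gamma_2})^{-1}$ is a strictly positive constant once the point is fixed. Dividing the inequality of Theorem \ref{thm7} through by this constant gives
$$|K_{n_1,n_2}^a(f;x,y)-f(x,y)|\leq \frac{M_9(\gamma_1,\gamma_2)}{\omega_{\gamma_1,\gamma_2}(x,y)}\,\omega\left(f;C_{\gamma_1,\gamma_2};\frac{\delta_{n_1}^a(x)}{\sqrt{n_1+1}},\frac{\delta_{n_2}^a(y)}{\sqrt{n_2+1}}\right),$$
so it suffices to prove that the right-hand side tends to $0$ as $n_1,n_2\to\infty$.

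Next I would examine the two arguments of the modulus of continuity. Using $\{\delta_{n_1}^a(x)\}^2=\phi^2(x)+(1+a)^2/(n_1+1)$ from Remark \ref{r1}, one has
$$\frac{\delta_{n_1}^a(x)}{\sqrt{n_1+1}}=\left(\frac{x(1+x)}{n_1+1}+\frac{(1+a)^2}{(n_1+1)^2}\right)^{1/2},$$
which, for fixed $x$, converges to $0$ as $n_1\to\infty$; the $y$-argument behaves identically. Hence both arguments of $\omega(f;C_{\gamma_1,\gamma_2};\cdot,\cdot)$ shrink to $0$ as the indices grow.

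The final and essential step is to invoke the continuity property of the weighted modulus, namely that $\omega(f;C_{\gamma_1,\gamma_2};t,s)\to 0$ as $(t,s)\to(0,0)$. This is exactly where membership $f\in C_{\gamma_1,\gamma_2}(I)$ is used: by definition $\omega_{\gamma_1,\gamma_2}f$ is bounded and \emph{uniformly} continuous on $I$. The point I expect to require the most care is justifying this vanishing, since in $\Delta_{h,\delta}f$ the increment is evaluated at $(x+h,y+\delta)$ while the norm weights by $\omega_{\gamma_1,\gamma_2}(x,y)$; one must first bound the ratio $\omega_{\gamma_1,\gamma_2}(x,y)/\omega_{\gamma_1,\gamma_2}(x+h,y+\delta)$, which stays uniformly bounded for $h,\delta$ in a fixed bounded range, and then transfer the uniform continuity of $\omega_{\gamma_1,\gamma_2}f$ to the weighted difference $\omega_{\gamma_1,\gamma_2}(x,y)\,\Delta_{h,\delta}f(x,y)$. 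Combining the positivity of the fixed weight, the decay of the moment-type arguments, and the vanishing of the modulus then yields $\lim_{n_1,n_2\to\infty}K_{n_1,n_2}^a(f;x,y)=f(x,y)$ for every $(x,y)\in I$, completing the proof.
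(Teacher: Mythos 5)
Your argument is correct and is exactly the route the paper takes: the paper states this theorem simply ``as a consequence of Theorem \ref{thm7}'' with no further proof, and your proposal supplies the natural details (positivity of the fixed weight, decay of $\delta_{n_i}^a/\sqrt{n_i+1}$, and vanishing of the weighted modulus, including the genuinely needed uniform bound on the weight ratio).
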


\begin{thm}\label{t9}{\bf{(Voronovskaja type theorem)}}
Let $f\in C_{\gamma_1,\gamma_2}^2(I).$ Then for every $(x,y)\in I,$
\begin{eqnarray*}
\displaystyle\lim_{n\rightarrow \infty}n\{K_{n,n}^a(f;x,y)-f(x,y)\}
&=&\bigg(-x+\frac{ax}{1+x}+\frac{1}{2}\bigg)f_x(x,y)+\bigg(-y+\frac{ay}
{1+y}+\frac{1}{2}\bigg)f_y(x,y)\\&&+\frac{x}{2}(x+2)f_{xx}(x,y)
+\frac{y}{2}(y+2)f_{yy}(x,y).
\end{eqnarray*}
\end{thm}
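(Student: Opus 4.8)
The plan is to combine a second-order Taylor expansion of $f$ about the fixed point with the explicit central moments already recorded in Lemma \ref{l4}. Fix $(x,y)\in I$. Since $f\in C_{\gamma_1,\gamma_2}^2(I)$, Taylor's theorem gives, for $(t,z)\in I$,
\begin{eqnarray*}
f(t,z)&=&f(x,y)+f_x(x,y)(t-x)+f_y(x,y)(z-y)\\
&&+\tfrac{1}{2}f_{xx}(x,y)(t-x)^2+f_{xy}(x,y)(t-x)(z-y)+\tfrac{1}{2}f_{yy}(x,y)(z-y)^2\\
&&+\eta(t,z)\big((t-x)^2+(z-y)^2\big),
\end{eqnarray*}
where $\eta(t,z)=\eta(t,z;x,y)$ is bounded on $I$ and $\eta(t,z)\to 0$ as $(t,z)\to(x,y)$.

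Next I would apply $K_{n,n}^a(\cdot;x,y)$ to both sides, using linearity together with $K_{n,n}^a(e_{0,0};x,y)=1$ from Lemma \ref{l3}(i), and then multiply through by $n$. The first-order terms are handled directly by Lemma \ref{l4}(i)--(ii), giving $nK_{n,n}^a((u-x);x,y)\to(-x+\frac{ax}{1+x}+\frac{1}{2})$ and the analogous limit for $(v-y)$. The pure quadratic terms are handled by Lemma \ref{l4}(iii)--(iv); keeping only the leading $O(n^{-1})$ contribution yields $nK_{n,n}^a((u-x)^2;x,y)\to x(1+x)$ and $nK_{n,n}^a((v-y)^2;x,y)\to y(1+y)$. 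For the mixed term I would use the product structure (\ref{p1}): $K_{n,n}^a((u-x)(v-y);x,y)=K_n^a((u-x);x)\,K_n^a((v-y);y)$ is $O(n^{-2})$, so its contribution after multiplication by $n$ tends to $0$ and the $f_{xy}$ term disappears in the limit. Collecting the surviving terms produces exactly the right-hand side of the statement.

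The main obstacle is to show that the remainder contributes nothing, i.e. that $nK_{n,n}^a\big(\eta(t,z)\big((t-x)^2+(z-y)^2\big);x,y\big)\to 0$. Write $d^2=(t-x)^2+(z-y)^2$. Given $\varepsilon>0$, choose $\delta>0$ with $|\eta(t,z)|<\varepsilon$ whenever $d<\delta$, and bound $\eta$ by a constant $C$ elsewhere. Splitting the operator accordingly, the near-diagonal part is dominated by $\varepsilon\,nK_{n,n}^a(d^2;x,y)$, which stays bounded by the quadratic-moment estimates above, hence can be made arbitrarily small. For the far part I would use $\mathbf{1}_{\{d\ge\delta\}}d^2\le\delta^{-2}d^4$ together with $K_{n,n}^a(d^4;x,y)=K_{n,n}^a\big((t-x)^4+2(t-x)^2(z-y)^2+(z-y)^4;x,y\big)$; by Lemma \ref{lm1}(iii) the fourth central moments are $O(n^{-2})$, and the cross term factorizes through (\ref{p1}) into a product of two $O(n^{-1})$ second moments, so $K_{n,n}^a(d^4;x,y)=O(n^{-2})$. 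Consequently $nK_{n,n}^a(\mathbf{1}_{\{d\ge\delta\}}d^2;x,y)\le\delta^{-2}nK_{n,n}^a(d^4;x,y)=O(n^{-1})\to 0$. Since $\varepsilon$ is arbitrary, the remainder term vanishes, which completes the argument.
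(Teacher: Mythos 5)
Your overall strategy --- second--order Taylor expansion, the central moments of Lemma \ref{l4}, factorization of the mixed term through (\ref{p1}), and a separate argument killing the remainder --- is the same skeleton the paper uses. The one genuine divergence is the remainder: the paper writes it as $\psi(u,v)\sqrt{(u-x)^4+(v-y)^4}$ with $\psi\in C_{\gamma_1,\gamma_2}(I)$, $\psi(x,y)=0$, applies the Cauchy--Schwarz inequality to get the bound $\{K_{n,n}^a(\psi^2)\}^{1/2}\{K_{n,n}^a((u-x)^4+(v-y)^4)\}^{1/2}$, and then invokes the convergence theorem (Theorem \ref{thm8}) to get $K_{n,n}^a(\psi^2;x,y)\to\psi^2(x,y)=0$, while Lemma \ref{lm1}(iii) gives $O(n^{-2})$ for the fourth moments. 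Your $\varepsilon$--$\delta$ splitting is the more classical Bohman--Korovkin device. It works cleanly when $f$ is bounded, but in the weighted setting of this paper it has a gap: for $f\in C_{\gamma_1,\gamma_2}(I)$ the function $\eta(t,z)=\big(f(t,z)-P_2(t,z)\big)/d^2$ need \emph{not} be bounded on $I$ --- it grows like $t^{\gamma_1}z^{\gamma_2}/d^2$ at infinity --- so your ``bound $\eta$ by a constant $C$ elsewhere'' fails for $\gamma_1+\gamma_2>2$. To repair the far part you would need a weighted fourth--moment estimate of the type $\omega_{\gamma_1,\gamma_2}(x,y)\,K_{n,n}^a\big(d^4/\omega_{\gamma_1,\gamma_2}(t,z);x,y\big)=O(n^{-2})$, analogous to Lemma \ref{l7}(ii) but not stated in the paper. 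The paper's Cauchy--Schwarz route sidesteps exactly this, because $\psi^2$ lands in $C_{2\gamma_1,2\gamma_2}(I)$ where Theorem \ref{thm8} already applies; that is what the extra square root buys.

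A second point you should not gloss over: you correctly derive $nK_{n,n}^a((u-x)^2;x,y)\to x^2+x=x(1+x)$ from Lemma \ref{l4}(iii), which yields the coefficient $\tfrac{x}{2}(1+x)$ for $f_{xx}$, yet you then assert this ``produces exactly the right-hand side of the statement,'' which displays $\tfrac{x}{2}(x+2)$. These do not agree. Your moment computation is the one consistent with Lemmas \ref{lm1} and \ref{l4} (and with the classical Baskakov second moment $x(1+x)/n$), so the discrepancy points to a typo in the theorem as stated rather than an error in your calculus --- but you must either flag the mismatch or carry the coefficient you actually computed; claiming exact agreement where there is none is itself an error.
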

\begin{proof}
Let $(x,y)\in I$ be fixed. By Taylor formula, we have
\begin{eqnarray*}
f(u,v)&=&f(x,y)+f_{x}(x,y)(u-x)+f_{y}(x,y)(v-y)\\&&+\frac{1}{2}\{f_{xx}
(x,y)(u-x)^2+2f_{xy}(x,y)(u-x)(v-y)+f_{yy}
(x,y)(v-y)^2\}\\&&+ \psi(u,v;x,y)\sqrt{(u-x)^4+(v-y)^4},
\end{eqnarray*}
where $\psi(.,.;x,y)\equiv \psi(.,.)\in C_{\gamma_1,\gamma_2}(I)$ and $\psi(x,y)=0.$ Thus, we get
\begin{eqnarray}\label{e6}
K_{n,n}^a(f(u,v);x,y)&=&f(x,y)+f_{x}(x,y)K_{n,n}^a(u-x;x)+f_{y}(x,y)
K_{n,n}^a(v-y;y)\nonumber\\&&+\frac{1}{2}\{f_{xx}(x,y)K_{n,n}^a((u-x)^2;x)
+2f_{xy}(x,y)K_{n,n}^a(u-x;x)K_{n,n}^a(v-y;y)\nonumber\\&&+f_{yy}
(x,y)K_{n,n}^a((v-y)^2;y)\}+K_{n,n}^a(\psi(u,v)\sqrt{(u-x)^4+(v-y)^4};x,y).\nonumber\\
\end{eqnarray}
Applying the H$\ddot{o}$lder inequality, we have
\begin{eqnarray*}
|K_{n,n}^a(\psi(u,v)\sqrt{(u-x)^4+(v-y)^4};x,y)|&\leq& \{K_{n,n}^a(\psi^2(u,v);x,y)\}
^{1/2}\{K_{n,n}^a((u-x)^4+(v-y)^4;x,y)\}^{1/2}\\
&\leq& \{K_{n,n}^a(\psi^2(u,v);x,y)\}^{1/2}\{K_n^a((u-x)^4;x)+K_n^a((v-y)^4;y)\}^{1/2}.
\end{eqnarray*}
By Theorem \ref{thm8}
\begin{eqnarray*}
\displaystyle \lim_{n\rightarrow\infty}K_{n,n}^a(\psi^2(u,v);x,y)
=\psi^2(x,y)=0,
\end{eqnarray*}
and from Lemma \ref{lm1} (iii), for each $(x,y)\in I, K_n^a((u-x)^4;x)=
O(\frac{1}{n^2})$ and $K_n^a((v-y)^4;y)=O(\frac{1}{n^2}).$\\
Hence
\begin{eqnarray}\label{e5}
\displaystyle \lim_{n\rightarrow\infty}n K_{n,n}^a(\psi(u,v)\sqrt{(u-x)^4+(v-y)^4};x,y)=0.
\end{eqnarray}
By combining (\ref{e6}) and (\ref{e5}), we obtain the desired result.
\end{proof}

\subsection{Simultaneous approximation}
\begin{thm}\label{t10}
Let $f\in C_{\gamma_1,\gamma_2}^1(I).$ Then for every $(x,y)\in \mathbb{R}_+^2=\mathbb{R}_+\times \mathbb{R}_+,$
\begin{eqnarray}\label{e11}
\displaystyle \lim_{n\rightarrow\infty}\bigg(\frac{\partial}{\partial \omega}
K_{n,n}^a(f;\omega,y)\bigg)_{\omega=x}&=&\frac{\partial f}{\partial x}(x,y),
\end{eqnarray}
\begin{eqnarray}\label{e12}
\displaystyle \lim_{n\rightarrow\infty}\bigg(\frac{\partial}{\partial \nu}
K_{n,n}^a(f;x,\nu)\bigg)_{\nu=y}&=&\frac{\partial f}{\partial y}(x,y).
\end{eqnarray}
\end{thm}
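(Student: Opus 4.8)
The plan is to reduce everything to the $x$-direction: by the product structure (\ref{p1}) the differentiation $\tfrac{\partial}{\partial\omega}$ touches only the factors $W_{n,k_1}^a(\omega)$, and the symmetric argument yields (\ref{e12}). Throughout I evaluate at $\omega=x\in\mathbb{R}_+$, where $x(1+x)>0$. First I would justify differentiating the defining series term by term (the differentiated series converges uniformly on compact subintervals of $(0,\infty)$, since the weights and their derivatives decay faster than any power of $k_1$ while $f$ grows only polynomially). The single identity driving the proof, obtained by logarithmic differentiation, is
\[
\omega(1+\omega)(W_{n,k}^a)'(\omega)=\Big(k-n\omega-\tfrac{a\omega}{1+\omega}\Big)W_{n,k}^a(\omega).
\]
Combined with $\sum_k kW_{n,k}^a(\omega)=n\omega+\tfrac{a\omega}{1+\omega}$ (a short generating-function computation), it gives the crucial vanishing $\sum_k (W_{n,k}^a)'(\omega)=0$, consistent with $\tfrac{d}{d\omega}\sum_k W_{n,k}^a\equiv 0$.

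The heart of the argument is a telescoping that removes the ``bad'' part of $f$. Writing $f(u,v)=f(x,v)+\int_x^u \tfrac{\partial f}{\partial s}(s,v)\,ds$ and inserting this into the differentiated operator, the piece $f(x,v)$ is constant in $u$ and $k_1$, so pulling it out produces the factor $\sum_{k_1}(W_{n,k_1}^a)'(x)$, which annihilates it, leaving
\[
\Big(\tfrac{\partial}{\partial\omega}K_{n,n}^a(f;\omega,y)\Big)_{\omega=x}=(n+1)^2\sum_{k_1,k_2}(W_{n,k_1}^a)'(x)\,W_{n,k_2}^a(y)\int_{k_2}\!\int_{k_1}\!\int_x^u \tfrac{\partial f}{\partial s}(s,v)\,ds\,du\,dv,
\]
where the $u,v$ integrals run over the Kantorovich cells $[\tfrac{k_1}{n+1},\tfrac{k_1+1}{n+1}]$, $[\tfrac{k_2}{n+1},\tfrac{k_2+1}{n+1}]$. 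I then split $\tfrac{\partial f}{\partial s}(s,v)=\tfrac{\partial f}{\partial x}(x,y)+\delta(s,v)$, where $\delta(s,v)=\tfrac{\partial f}{\partial s}(s,v)-\tfrac{\partial f}{\partial x}(x,y)\to 0$ as $(s,v)\to(x,y)$ by continuity of the derivative. The constant part contributes $\tfrac{\partial f}{\partial x}(x,y)\,S_n(x)$, where $S_n(x)=(n+1)\sum_{k_1}(W_{n,k_1}^a)'(x)\int_{k_1}(u-x)\,du=\tfrac{d}{dx}K_n^a(e_1;x)$ (the $v,k_2$ sum integrating to $1$); by Lemma \ref{l3}(ii) this equals $\tfrac{n}{n+1}+\tfrac{a}{(n+1)(1+x)^2}\to 1$, which is exactly where the limit $\tfrac{\partial f}{\partial x}(x,y)$ comes from.

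It then remains to show the $\delta$-contribution $R_n\to 0$, which is the main obstacle. I would use a near/far decomposition: given $\varepsilon>0$, choose $\rho$ so that $|\delta|<\varepsilon$ whenever $|s-x|,|v-y|<\rho$. On the near region $\big|\int_x^u\delta\,ds\big|\le\varepsilon|u-x|$, so that part is bounded by a constant times $\varepsilon\,(n+1)\sum_{k_1}|k_1-nx-\tfrac{ax}{1+x}|\,W_{n,k_1}^a(x)\int_{k_1}|u-x|\,du$, and two applications of the Cauchy--Schwarz inequality (the second one gaining the cell length $\tfrac{1}{n+1}$) bound this by $\{\mathrm{Var}(k_1)\}^{1/2}\{K_n^a((u-x)^2;x)\}^{1/2}$. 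Here $\mathrm{Var}(k_1)=nx(1+x)+\tfrac{ax}{1+x}=O(n)$ follows from the identity above, while Remark \ref{r1} gives $K_n^a((u-x)^2;x)=O(1/n)$, so this factor is $O(1)$ and the near part is $\le C\varepsilon$ uniformly in $n$. On the far region I would absorb the indicator into $(u-x)^2+(v-y)^2\ge\rho^2$ and use the polynomial growth of $\tfrac{\partial f}{\partial s}\in C_{\gamma_1,\gamma_2}(I)$ together with Lemma \ref{l7} and the decay of the higher central moments (Lemma \ref{lm1}(iii)) to conclude that the far part tends to $0$. Letting $n\to\infty$ and then $\varepsilon\to 0$ establishes (\ref{e11}), and interchanging the roles of the two variables gives (\ref{e12}).
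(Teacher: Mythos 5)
Your proposal is correct and reaches the same main-term computation as the paper, but it handles the error term by a genuinely different route. Both arguments turn on the same two facts: the logarithmic-derivative identity $x(1+x)(W_{n,k}^a)'(x)=\bigl(k-nx-\tfrac{ax}{1+x}\bigr)W_{n,k}^a(x)$ (which is exactly the factor $\tfrac{(k_1-nx)(1+x)-ax}{x(1+x)^2}$ appearing in the paper), and the convergence $\tfrac{d}{dx}K_n^a(e_1;x)=\tfrac{1}{n+1}\bigl(n+\tfrac{a}{(1+x)^2}\bigr)\to 1$, which is where $f_x(x,y)$ emerges in both treatments. The difference is in the remainder. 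The paper expands $f$ by the bivariate Peano form $f(u,v)=f(x,y)+f_x(x,y)(u-x)+f_y(x,y)(v-y)+\psi(u,v)\sqrt{(u-x)^2+(v-y)^2}$ with $\psi(x,y)=0$, splits the differentiated remainder as $E=E_1+E_2$ via the same weight identity, and after two applications of the Schwarz inequality reduces everything to $\{K_{n,n}^a(\psi^4;x,y)\}^{1/4}\to 0$, invoking Theorem \ref{thm8}. You instead use the integral form of the remainder in the $x$-variable only, annihilate the $u$-independent part by $\sum_k(W_{n,k}^a)'(x)=0$, and dispose of the $\delta$-term by an $\varepsilon$--$\rho$ near/far split. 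Your version has the advantage of not needing to assert that $\psi$ (let alone $\psi^4$) belongs to the weighted space $C_{\gamma_1,\gamma_2}(I)$ with the \emph{same} exponents --- a point the paper passes over silently, since $\psi^4$ naturally lives in $C_{4\gamma_1,4\gamma_2}(I)$; the price is that your far-region estimate needs weighted fourth-moment bounds of the type $\omega_{\gamma_1}(x)K_n^a\bigl(\tfrac{(t-x)^4}{\omega_{\gamma_1}(t)};x\bigr)=O(n^{-2})$, which Lemma \ref{l7} states only for order two (the extension is routine but should be recorded). One small quibble: in the near-region bound a single application of the Cauchy--Schwarz inequality with respect to the measure $(n+1)W_{n,k_1}^a(x)\,du$ already yields $\{\mathrm{Var}(k_1)\}^{1/2}\{K_n^a((u-x)^2;x)\}^{1/2}=O(1)$; your "two applications" phrasing is harmless but unnecessary.
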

\begin{proof}
We shall prove only (\ref{e11}) because the proof of (\ref{e12}) is identical.\\
By the Taylor formula for $f\in C_{\gamma_1,\gamma_2}^1(I),$ we have
\begin{eqnarray*}
f(u,v)=f(x,y)+f_x(x,y)(u-x)+f_y(x,y)(v-y)+\psi(u,v;x,y)
\sqrt{(u-x)^2+(v-y)^2}   \,\,\, \mbox{for}\,\, (u,v)\in I,
\end{eqnarray*}
where $\psi(u,v;x,y)\equiv \psi(.,.)\in C_{\gamma_1, \gamma_2}(I)$ and $\psi(x,y)=0.$\\
Operating $K_{n,n}^a(.;.,y)$ to the above inequality and then by using
Lemma \ref{l4}, we get
\begin{eqnarray*}
\bigg(\frac{\partial}{\partial \omega}K_{n,n}^a(f(u,v);\omega,y)\bigg)
_{\omega=x}&=&f(x,y)\bigg(\frac{\partial}{\partial \omega}K_{n,n}^a(1;\omega,y)\bigg)
_{\omega=x}+f_x(x,y)\bigg(\frac{\partial}{\partial \omega}
K_{n,n}^a(u-x;\omega,y)\bigg)_{\omega=x}\\&&+f_y(x,y)\bigg(\frac{\partial}
{\partial \omega}K_{n,n}^a(v-y;\omega,y)\bigg)_{\omega=x}\\&&+\bigg(\frac{\partial}
{\partial \omega}K_{n,n}^a(\psi(u,v;x,y)\sqrt{(u-x)^2+(v-y)^2};\omega,y)\bigg)
_{\omega=x},\,\,\, \mbox{for}\,\, (u,v)\in I\\
&=&f_x(x,y)\bigg\{\frac{\partial}{\partial \omega}\bigg(\frac{1}{n+1}\bigg(n\omega
+\frac{a\omega}{1+\omega}+\frac{1}{2}\bigg)\bigg)\bigg\}_{\omega=x}\\&&+f_y(x,y)
\bigg\{\frac{\partial}{\partial \omega}\bigg(\frac{1}{n+1}\bigg(ny+\frac{ay}{1+y}
+\frac{1}{2}\bigg)\bigg)\bigg\}_{\omega=x}+E\,\mbox{(say)}.
\end{eqnarray*}
It is sufficient to prove that $E\rightarrow 0$ as $n\rightarrow \infty.$
\begin{eqnarray*}
E&=&(n+1)^2\sum_{k_1=0}^{\infty}\sum_{k_2=0}^{\infty}\bigg(\frac{\partial}
{\partial \omega}W_{n,n,k_1,k_2}^a(\omega,y)\bigg)_{\omega=x}\int_{\frac{k_2}{n+1}}
^{\frac{k_2+1}{n+1}}\int_{\frac{k_1}{n+1}}^{\frac{k_1+1}{n+1}}\psi(u,v)
\sqrt{(u-x)^2+(v-y)^2}du\,dv\\&=&(n+1)^2\sum_{k_1=0}^{\infty}\sum_{k_2=0}^{\infty}
\frac{\{(k_1-nx)(1+x)-ax\}}{x(1+x)^2}W_{n,n,k_1,k_2}^a(x,y)\int_{\frac{k_2}{n+1}}
^{\frac{k_2+1}{n+1}}\int_{\frac{k_1}{n+1}}^{\frac{k_1+1}{n+1}}\psi(u,v)
\sqrt{(u-x)^2+(v-y)^2}du\,dv\\&=&\frac{n(n+1)^2}{x(1+x)}\sum_{k_1=0}
^{\infty}\sum_{k_2=0}^{\infty}\bigg(\frac{k_1}{n}-x\bigg)W_{n,n,k_1,k_2}^a(x,y)
\int_{\frac{k_2}{n+1}}^{\frac{k_2+1}{n+1}}\int_{\frac{k_1}{n+1}}^{\frac{k_1+1}{n+1}}
\psi(u,v)\sqrt{(u-x)^2+(v-y)^2}du\,dv\\&&-\frac{a}{(1+x)^2}K_{n,n}^a(\psi(u,v)
\sqrt{(u-x)^2+(v-y)^2};\omega,y)\\&=&E_1+E_2,\,\,\mbox{say.}
\end{eqnarray*}
First, we estimate $E_1$ by using Schwarz inequality.
\begin{eqnarray*}
E_1&\leq& \frac{n}{x(1+x)}\bigg(\sum_{k_1=0}^{\infty}W_{n,k_1}^a(x)\bigg(
\frac{k_1}{n}-x\bigg)^2\bigg)^{1/2}\\&&\times\bigg((n+1)^2\sum_{k_1=0}^{\infty}
\sum_{k_2=0}^{\infty}W_{n,n,k_1,k_2}^a(x,y)\int_{\frac{k_2}{n+1}}^{\frac{k_2+1}{n+1}}\int_{\frac{k_1}{n+1}}
^{\frac{k_1+1}{n+1}}\psi^2(u,v)((u-x)^2+(v-y)^2) du\,dv\bigg)^{1/2}\\
&\leq&  \frac{n}{x(1+x)}\bigg(\sum_{k_1=0}^{\infty}W_{n,k_1}^a(x)\bigg(
\frac{k_1}{n}-x\bigg)^2\bigg)^{1/2}\{K_{n,n}^a(\psi^4(u,v);x,y)(K_{n}^a
((u-x)^4;x)\\&&+2K_{n}^a((u-x)^2;x)(K_{n}^a((v-y)^2;y)+K_{n}^a((v-y)^4;y))\}^{1/4}\\
|E_1|&\leq& M_{12}(x,y)\{K_{n,n}^a(\psi^4(u,v);x,y)\}^{1/4},\,\, \mbox{in view of Lemma \ref{lm1}}.
\end{eqnarray*}
From Theorem \ref{thm8}, we obtain
\begin{eqnarray*}
\lim_{n\rightarrow\infty} K_{n,n}^a(\psi^4(u,v);x,y)=\psi^4(x,y)=0, \,\, \mbox{for} \,\, (x,y)\in \mathbb{R}_+^2.
\end{eqnarray*}
To estimate $E_2,$ proceeding in a manner similar to the estimate of $E_1$, we get $E_2\rightarrow 0$ as $n\rightarrow \infty.$\\Combining the estimates of $E_1$ and $E_2,$ we it follows that $E\rightarrow 0$ as $n\rightarrow \infty.$ This completes the proof.
\end{proof}

Similarly, we can prove the following theorem :
\begin{thm}
Let $f\in C_{\gamma_1,\gamma_2}^3(I).$ Then for every $(x,y)\in \mathbb{R}_+^2,$ we have
\begin{eqnarray*}
\displaystyle \lim_{n\rightarrow\infty}n\bigg\{\bigg(\frac{\partial}{\partial \omega}
K_{n,n}^a(f;\omega,y)\bigg)_{\omega=x}-\frac{\partial f}{\partial x}(x,y)\bigg\}
&=&\bigg(-1+\frac{a}{(1+x)^2}\bigg)f_x(x,y)+\left(1+\frac{ax}{1+x}\right)f_{xx}(x,y)\\
&&+\bigg(-y+\frac{ay}{1+y}+\frac{1}{2}\bigg)f_{xy}(x,y)+\frac{y}{2}(1+y)f_{xyy}(x,y)\\&&
+\frac{x}{2}(1+x)f_{xxx}(x,y)
\end{eqnarray*}
and
\begin{eqnarray*}
\displaystyle \lim_{n\rightarrow\infty}n\bigg\{\bigg(\frac{\partial}{\partial \nu}
K_{n,n}^a(f;x,\nu)\bigg)_{\nu=y}-\frac{\partial f}{\partial y}(x,y)\bigg\}
&=&\bigg(-1+\frac{a}{(1+y)^2}\bigg)f_y(x,y)+\left(1+\frac{ay}{1+y}\right)f_{yy}(x,y)\\
&&+\bigg(-x+\frac{ax}{1+x}+\frac{1}{2}\bigg)f_{xy}(x,y)+\frac{x}{2}(1+x)f_{xxy}(x,y)
\\&&+\frac{y}{2}(1+y)f_{yyy}(x,y).
\end{eqnarray*}
\end{thm}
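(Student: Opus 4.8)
The plan is to follow the pattern of the proof of Theorem \ref{t10}, but since here $f\in C_{\gamma_1,\gamma_2}^{3}(I)$ and we seek the $n$-scaled asymptotics, I would expand $f$ by Taylor's formula up to third order about the fixed point $(x,y)$:
$$f(u,v)=\sum_{i+j\le 3}\frac{1}{i!\,j!}\,\frac{\partial^{i+j}f}{\partial x^{i}\partial y^{j}}(x,y)\,(u-x)^{i}(v-y)^{j}+\psi(u,v)\sqrt{(u-x)^{6}+(v-y)^{6}},$$
where $\psi\equiv\psi(\,\cdot\,,\cdot\,;x,y)\in C_{\gamma_1,\gamma_2}(I)$ and $\psi(x,y)=0$. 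Operating $\big(\tfrac{\partial}{\partial\omega}K_{n,n}^{a}(\,\cdot\,;\omega,y)\big)_{\omega=x}$ on both sides reduces the problem to two tasks: (a) evaluating this functional on each monomial $(u-x)^{i}(v-y)^{j}$ and taking limits after multiplication by $n$, and (b) showing that its action on the remainder, multiplied by $n$, tends to $0$.

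For (a), the decisive simplification is the product structure (\ref{p1}): since differentiation in $\omega$ affects only the $u$-slot, one has $\big(\tfrac{\partial}{\partial\omega}K_{n,n}^{a}((u-x)^{i}(v-y)^{j};\omega,y)\big)_{\omega=x}=A_{i}(x)\,B_{j}(y)$, where $B_{j}(y)=K_{n}^{a}((v-y)^{j};y)=u_{n,j}^{a}(y)$ is the univariate central moment of Lemma \ref{lm1} and $A_{i}(x)=\big(\tfrac{\partial}{\partial\omega}K_{n}^{a}((u-x)^{i};\omega)\big)_{\omega=x}$. Writing $(u-x)^{i}=\sum_{l=0}^{i}\binom{i}{l}(u-\omega)^{l}(\omega-x)^{i-l}$ and differentiating, all but two terms vanish at $\omega=x$, giving the clean formula $A_{i}(x)=i\,u_{n,i-1}^{a}(x)+\big(u_{n,i}^{a}\big)'(x)$; in particular $A_{0}\equiv 0$ and $A_{1}(x)=1+(u_{n,1}^{a})'(x)$. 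From the explicit moments of Lemma \ref{lm1}(i) and the orders of Lemma \ref{lm1}(iii) one then reads off
$$n\big(A_{1}(x)-1\big)\to -1+\frac{a}{(1+x)^{2}},\qquad nA_{2}(x)\to 2+\frac{2ax}{1+x},\qquad nA_{3}(x)\to 3x(1+x),$$
together with $nB_{1}(y)\to -y+\frac{ay}{1+y}+\frac12$ and $nB_{2}(y)\to y(1+y)$. All remaining contributions drop out: those carrying $A_{0}$ (the $f_{y}$, $f_{yy}$, $f_{yyy}$ terms) vanish identically, while the $f_{xxy}$ term $\tfrac12 f_{xxy}A_{2}(x)B_{1}(y)$ is $O(n^{-2})$ since both factors are $O(n^{-1})$. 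Assembling the five surviving terms $f_{x}A_{1}$, $\tfrac12 f_{xx}A_{2}$, $f_{xy}A_{1}B_{1}$, $\tfrac16 f_{xxx}A_{3}$ and $\tfrac12 f_{xyy}A_{1}B_{2}$, subtracting $f_{x}(x,y)$ and multiplying by $n$, yields precisely the right-hand side. The one point requiring mild care is that $A_{3}$ involves $(u_{n,3}^{a})'(x)$: since $u_{n,3}^{a}$ is rational in $x$ (Lemma \ref{lm1}(ii)) of order $O(n^{-2})$ (Lemma \ref{lm1}(iii)), its $x$-derivative at a fixed point is again $O(n^{-2})$, so $n(u_{n,3}^{a})'(x)\to 0$ and only $3\,u_{n,2}^{a}(x)$ survives in $nA_{3}$.

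The main obstacle is (b): the remainder must now survive an extra factor of $n$ relative to Theorem \ref{t10}, which is exactly why the third-order Taylor expansion is needed. I would use the same logarithmic-derivative identity as in Theorem \ref{t10}, namely $\big(\tfrac{\partial}{\partial\omega}W_{n,n,k_{1},k_{2}}^{a}(\omega,y)\big)_{\omega=x}=\frac{(k_{1}-nx)(1+x)-ax}{x(1+x)^{2}}\,W_{n,n,k_{1},k_{2}}^{a}(x,y)$, and split the remainder contribution $E$ as $E_{1}+E_{2}$, where $E_{1}$ carries the factor $(k_{1}-nx)=n(\tfrac{k_{1}}{n}-x)$ and $E_{2}=-\tfrac{a}{(1+x)^{2}}K_{n,n}^{a}(R;x,y)$ with $R=\psi\sqrt{(u-x)^{6}+(v-y)^{6}}$. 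A first Cauchy--Schwarz inequality isolates $(\tfrac{k_{1}}{n}-x)$, producing the marginal variance factor $\big(\sum_{k_{1}}W_{n,k_{1}}^{a}(x)(\tfrac{k_{1}}{n}-x)^{2}\big)^{1/2}=O(n^{-1/2})$; a second Cauchy--Schwarz isolates $\psi$, producing $\big(K_{n,n}^{a}(\psi^{4};x,y)\big)^{1/4}$, while the leftover twelfth-order moments contribute, through Lemma \ref{lm1}(iii), a factor $O(n^{-3/2})$. The powers balance: the explicit $n$ in $E_{1}$, the overall factor $n$, the variance factor $n^{-1/2}$ and the moment factor $n^{-3/2}$ combine as $n\cdot n\cdot n^{-1/2}\cdot n^{-3/2}=1$, so that $nE_{1}$ is bounded by $O(1)\cdot\big(K_{n,n}^{a}(\psi^{4};x,y)\big)^{1/4}$. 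By Theorem \ref{thm8}, $K_{n,n}^{a}(\psi^{4};x,y)\to\psi^{4}(x,y)=0$, whence $nE_{1}\to 0$; the term $nE_{2}$ is estimated the same way and is in fact smaller. Thus $nE\to 0$, which completes the proof of the first statement. The statement for $\tfrac{\partial}{\partial\nu}$ follows verbatim by interchanging the roles of the two variables.
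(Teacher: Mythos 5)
Your proposal is correct and is precisely the argument the paper intends: the paper omits the proof of this theorem, stating only that it is proved ``similarly'' to Theorem \ref{t10}, and your third-order Taylor expansion combined with the logarithmic-derivative identity for $\partial_\omega W^a_{n,n,k_1,k_2}$ and the double Cauchy--Schwarz estimate of the remainder is exactly that similar argument, with the power counting ($n\cdot n\cdot n^{-1/2}\cdot n^{-3/2}=O(1)$ times $\{K_{n,n}^a(\psi^4;x,y)\}^{1/4}\to 0$) worked out correctly. The identification $A_i(x)=i\,u_{n,i-1}^a(x)+(u_{n,i}^a)'(x)$ and the resulting limits reproduce all five coefficients in the statement, so no gap remains.
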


{\bf Acknowledgement}
The first author is thankful to the "Council of Scientific and Industrial Research" (Grant code: 09/143(0836)/2013-EMR-1)
India for financial support to carry out the above research work.

\end{document}